\theoremstyle{plain}
\newtheorem{theorem}{Theorem}[section]
 \newtheorem{lemma}[theorem]{Lemma}
\newtheorem{remark}{Remark}[section]
\newtheorem{assumption}{Assumption}[section]
\newcommand{\ind}{{\bf 1}}
 \def\beqlb{\begin{eqnarray}}\def\eeqlb{\end{eqnarray}}
 \def\beqnn{\begin{eqnarray*}}\def\eeqnn{\end{eqnarray*}}
\newcommand{\bcen}{\begin{center}}
\newcommand{\ecen}{\end{center}}
\newcommand{\bgeqn}{\begin{equation}}
\newcommand{\edeqn}{\end{equation}}
\begin{document}
\title{On large deviation probabilities for empirical distribution of branching random walks with heavy tails}
\author{Shuxiong Zhang}
\maketitle
%\address{School of Mathematical Sciences, Beijing Normal University, Beijing 100875, People's Republic of China.}
%\author{Shuxiong Zhang}
%\email{\Envelope shuxiong.zhang@qq.com}
%\keywords{empirical distribution, branching random walk, B\"{o}ttcher case, large deviation.}
%\subjclass[2010]{60J80, 60F10}

%\renewcommand{\thefootnote}{\fnsymbol{footnote}}
%\renewcommand{\thefootnote}{\arabic{footnote}}

\noindent\textit{Abstract:}
Given a branching random walk $(Z_n)_{n\geq0}$ on $\mathbb{R}$, let $Z_n(A)$ be the number of particles located  in interval $A$ at generation $n$. It is well known (e.g., \cite{biggins}) that under some mild conditions, $Z_n(\sqrt nA)/Z_n(\mathbb{R})$ converges a.s. to $\nu(A)$ as $n\rightarrow\infty$, where $\nu$ is the standard Gaussian measure. In this work, we investigate its large deviation probabilities under the condition that the step size or offspring law has heavy tail, i.e. the decay rate of
$$\mathbb{P}(Z_n(\sqrt nA)/Z_n(\mathbb{R})>p)$$
as $n\rightarrow\infty$, where $p\in(\nu(A),1)$. Our results complete those in \cite{ChenHe} and \cite{Louidor}.

\bigskip
\noindent Mathematics Subject Classifications (2020): 60F10, 60J80, 60G50.

\bigskip
\noindent\textit{Key words and phrases}: Branching random walk; Large deviation; Empirical distribution; Heavy tail.

\section{Introduction and Main results}
\subsection{Introduction}
 In this work, we consider a  branching random walk (BRW) model, which is governed by a probability distribution $\{p_k\}_{k\geq 0}$ on natural numbers (called the offspring distribution) and a real valued random variable $X$ (called the step size or displacement). This model is defined as follows. At time $0$, there is one particle located at the origin. The particle dies and produces offsprings according to the offspring distribution $\{p_k\}_{k\geq 0}$. Afterwards, the offspring particles move independently according to the law of $X$. This forms a process $Z_1$. For any point process $Z_n$, $n\geq 2$, we define it by the following iteration
$$Z_n=\sum_{x\in Z_{n-1}}\tilde{Z}_1^{x},$$
where $\tilde{Z}_1^{x}$ has the same distribution as $Z_1(\cdot-S_x)$ and $\{\tilde{Z}_1^{x}: x\in Z_{n-1}\}$ (conditioned on $Z_{n-1}$) are independent. Here and later, for a point process (also for a point measure) $\xi$, $x\in\xi$ means $x$ is an atom of $\xi$, and $S_x$ is the position of $x$ (i.e., $\xi=\sum_{x\in\xi}\delta_{S_x}$ ).\par

In the present work, we are interested in the large deviation probabilities of the corresponding empirical distribution, which is defined as
$$\bar{Z}_n(A):=\frac{Z_n(A)}{Z_n(\mathbb{R})},\text{~for~measurable~set}~A\subset\mathbb{R}.$$
According to Biggins \cite[Theorem 6]{biggins}: if $\mathbb{E}[X]=0,~\mathbb{E}[X^2]=1$ and $\mathbb{E}[Z_1(\mathbb{R})\log Z_1(\mathbb{R})]<\infty$, then for any Borel measurable set $A\subset\mathbb{R}$,
$$\bar{Z}_n(\sqrt n A)\rightarrow \nu(A),~\mathbb{P}-a.s.~\text{on~non-extinction},$$
where $\nu$ is the standard Gaussian measure. So, it's natural to study the decay rate of
$$\mathbb{P}(\bar{Z}_n(\sqrt nA)\geq p)$$
as $n\rightarrow\infty$, where $p\in(\nu(A),1)$. \par
 In fact, this question has been considered by Louidor and Perkins \cite{Louidor} under the assumption that $p_0=p_1=0$ and $\mathbb{P}(X=1)=\mathbb{P}(X=-1)$. Later, Chen and He \cite{ChenHe} further investigated the problem for unbounded displacements; see also \cite{louidor2017large}. However, they always assume that the offspring law and step size have exponential moment (or step size has stretched exponential moment). So, in this paper, we shall deal with the case that offspring law or step size has heavy tail (i.e., $\mathbb{E}[e^{\theta Z_1(\mathbb{R})}]=\mathbb{E}[e^{\theta X}]=+\infty$ for any $\theta>0$). We will see that the strategy to study this problem and the answers will be very different from theirs.\par

We also  mention here that, since the last few decades, the model BRW has been extensively studied due to its connection to many fields, such as Gaussian multiplicative chaos, random walk in random environment, random polymer, random algorithms and discrete Gaussian free field etc; see \cite{hushi}, \cite{liu98}, \cite{liu06}, \cite{bramsonding15} and \cite{levelsetzhan} references therein. One can refer to Shi \cite{zhan} for a more detailed overview. The large deviation probabilities (LDP) for BRW and branching Brownian motion (BBM) on real line have attracted many researcher's attention. For example: Hu \cite{yhu}, Gantert and H{\"o}felsauer \cite{GWlower} and Chen and He \cite{Helower} considered the LDP and the moderate deviation of BRW's maximum (for BBM's maximum, see Chauvin and Rouault \cite{chauvin} and Derrida and Shi \cite{derrida16,derrida17,derrida172}); \"{O}z \cite{Mehmet} studied the lower deviation of BBM's local mass. See also A\"{i}dekon, Hu and Shi \cite{levelsetzhan}  for the upper deviation of BBM's level sets. Some other related works include Rouault \cite{rouault}, Buraczewski and Ma\'slanka  \cite{Burma} and Bhattacharya \cite{bhattacharya}.\par
 The almost surely behaviour of $\bar{Z}_n(\sqrt nA)$ has been considered by many researchers, e.g., Biggins \cite{biggins}, Harris \cite{harris}, Asmussen and Kaplan \cite{asmussen} and Klebaner \cite{klebaner}. Moreover, the almost surely convergence rate of $\bar{Z}_n(\sqrt nA)-\nu(A)$ tends to zero has also been well studied recently: Chen \cite{Chenxia} considered the branching Wiener process; Gao and Liu \cite{Gao} generalized Chen's results to the BRW, but a kind of Cram\'er's condition is needed for the step size; Gr\"ubel and Kabluchko \cite{Rudolf} studied the case when the step size of BRW is lattice. These results show that $\sqrt n(\bar{Z}_n(\sqrt n A)-\nu(A))$ converges almost surely to a non-degenerate limit.

\subsection{Main Results}\label{mainresults}
Before giving our results, we first introduce some notations. Let $\mathcal{A}$ be the algebra generated by $\{(-\infty,x],x\in\mathbb{R}\}$. For a   non-empty set $A\in\mathcal{A}$ and $p\in\left(\nu(A),1\right)$, define
\[\begin{split}
& I_A(p)=\inf\left\{\left|x\right|:\nu\left(A-x\right)\geq p\right\},\\
& J_A(p)=\inf\left\{r:\sup_{x\in\mathbb{R}}\nu\left(\left(A-x\right)/\sqrt{1-r}\right)\geq p,~r\in\left[0,1\right)\right\}.
\end{split}\]

\noindent Let $|Z_n|:=Z_n(\mathbb{R})$, $m:=\mathbb{E}[|Z_1|]$ and $b:=\min\{k\geq 0: p_k>0\}\leq B:=\sup\{k\geq 0: p_k>0\}\leq+\infty$. Recall that $\{p_k\}_{k\geq 0}$ is the offspring law, and $X$ is the step size.
  In the sequel of this work, we always need the following assumptions.
\begin{assumption}\label{assume}\leavevmode \\
(i) $p_0=0$, $p_1<1;$\\
(ii) $X$ is symmetric and $\mathbb{E}[X^2]=1;$\\
(iii) $A$ is a no-empty set in $\mathcal{A}$.
\end{assumption}
\begin{remark} In assumption (i), $p_0=0$ is not essential, just for convenience. If not the case, we can condition the probability on non-extinction to obtain analogous results. Assumption (ii) is not essential either, but simplifies the proof. Assumption (iii) is crucial to our main results, and if $A\notin\mathcal{A}$, then the situation would be very different (see \cite[Proposition 1.3]{Louidor}).
\end{remark}\par

Now we are ready to state our main results. The first theorem concerns the case that the offspring law has  Pareto tail. Let
$\Lambda(a):=\sup\limits_{t\in\mathbb{R}}\{at-\log\mathbb{E}[e^{tX}]\}$ be the rate function in Cram\'er's theorem; see \cite[Sec 2.2]{Dembo}.
\begin{theorem}\label{main1}Take $p\in(\nu(A),1)$ such that $I_A(\cdot)$ is continuous at $p$ and $I_A(p)<\infty$. Suppose $\mathbb{P}(|Z_1|>x)=\Theta(1)x^{-\beta}$ as $x\rightarrow+\infty$ for some constant $\beta>1$. Assume $\mathbb{E}\left[e^{\theta X}\right]<\infty$ for some $\theta>0$.\\
(i) If $0<\beta-1<-\frac{\log {p_1}}{\log m}$, then
\begin{align}
-\inf_{h>0}\Phi_1(h)
& \leq\liminf_{n\rightarrow\infty}\frac{1}{\sqrt n}\log\mathbb{P}(\bar{Z}_n(\sqrt nA)\geq p)\cr
&\leq\limsup_{n\rightarrow\infty}\frac{1}{\sqrt n}\log\mathbb{P}(\bar{Z}_n(\sqrt nA)\geq p)\leq-\sup_{h>0}\Phi'_1(h),\nonumber
\end{align}
where $\Phi_1(h):=h(\beta-1)\log m+ \Lambda\left(\frac{I_A(p)}{h}\right)h$, $\Phi'_1(h):=(h(\beta-1)\log m)\wedge \left(\Lambda\left(\frac{I_A(p)}{h}\right)h\right)$, and we make the convention that $-\log{p_1}=+\infty$ if $p_1=0$.\\
(ii) If $\beta-1\geq -\frac{\log {p_1}}{\log m}$, then
\begin{align}
-\inf_{h>0}\Phi_2(h)
&\leq\liminf_{n\rightarrow\infty}\frac{1}{\sqrt n}\log\mathbb{P}(\bar{Z}_n(\sqrt nA)\geq p)\cr
&\leq\limsup_{n\rightarrow\infty}\frac{1}{\sqrt n}\log\mathbb{P}(\bar{Z}_n(\sqrt nA)\geq p)
\leq-\sup_{h>0}\Phi'_2(h),\nonumber
\end{align}
where $\Phi_2(h):=-h\log{p_1}+\Lambda\left(\frac{I_A(p)}{h}\right)h$, $\Phi'_2(h):=(-h\log{p_1})\wedge\left(\Lambda\left(\frac{I_A(p)}{h}\right)h\right)$.
\end{theorem}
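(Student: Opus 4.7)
The proof is driven by identifying two competing mechanisms for $\{\bar Z_n(\sqrt nA)\geq p\}$. \emph{Mechanism A} (relevant in case (i)): at generation $k=\lfloor h\sqrt n\rfloor$ some ancestor $v$ has accumulated displacement $S_v\geq I_A(p)\sqrt n$, which by Cram\'er costs $\exp(-h\sqrt n\,\Lambda(I_A(p)/h))$, and its subtree at generation $n$ carries at least a $p$-fraction of the population. Since $m^{-k}|Z_k|$ converges to a Pareto-tailed martingale limit $W$ of index $\beta$, a union bound over the $\approx m^{h\sqrt n}$ candidate ancestors yields a dominance cost of order $m^{-(\beta-1)h\sqrt n}$. \emph{Mechanism B} (relevant in case (ii)): a distinguished lineage survives with no siblings for $h\sqrt n$ generations (deterministic cost $p_1^{h\sqrt n}$) and then branches normally, with the same Cram\'er displacement cost. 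The rate $\Phi_i(h)$ is the sum of the two costs (both are realized simultaneously in the lower bound), while $\Phi_i'(h)$ is their minimum (the upper bound treats them as a dichotomy).

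For the lower bound, fix $h>0$, $\varepsilon>0$ and instantiate the cheaper mechanism. In regime (i), lower bound the probability of
\[ E_h=\bigl\{S_v\in[I_A(p)\sqrt n,(I_A(p)+\varepsilon)\sqrt n]\bigr\}\cap\bigl\{|Z_n^{(v)}|/|Z_n|\geq p\bigr\}, \]
where $v$ is a distinguished particle at generation $\lfloor h\sqrt n\rfloor$: the Cram\'er lower bound for $S_v$, the independence of the subtree of $v$ from the rest, and the Pareto lower tail estimate $\mathbb{P}(W^{(v)}/W\geq\lambda)\gtrsim\lambda^{-(\beta-1)}$ at $\lambda\approx m^{h\sqrt n}$ yield $\mathbb{P}(E_h)\geq\exp(-\sqrt n(\Phi_1(h)+o(1)))$. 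On $E_h$, Biggins' CLT applied to the subtree of $v$ (of depth $(1-o(1))n$) places a $\geq p$-fraction of its mass into $\sqrt nA-S_v$, hence into $\sqrt nA$ after shifting, using the definition of $I_A(p)$ together with its continuity at $p$. Infimizing over $h$ yields $-\inf_h\Phi_1(h)$; in regime (ii) the Pareto estimate is replaced by the explicit singleton event $\{|Z_{h\sqrt n}|=1\}$ of probability $p_1^{h\sqrt n}$, followed by a normal subtree argument.

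For the upper bound, decompose for each $h>0$ and $\varepsilon>0$:
\[ \mathbb{P}(\bar Z_n(\sqrt nA)\geq p)\leq\mathbb{P}(R_h)+\mathbb{P}(G\cap R_h^c), \]
where $R_h=\{\exists v\in Z_{\lfloor h\sqrt n\rfloor}:S_v\geq(I_A(p)-\varepsilon)\sqrt n\}$ and $G=\{\bar Z_n(\sqrt nA)\geq p\}$. The piece $\mathbb{P}(R_h)$ is controlled by truncating $|Z_{\lfloor h\sqrt n\rfloor}|$ at $m^{(1+\eta)h\sqrt n}$, applying the many-to-one lemma and Cram\'er on the truncated event, and using the Pareto tail of $W$ on its complement; this yields a bound of order $\exp(-h\sqrt n\,\Lambda(I_A(p)/h)+o(\sqrt n))$. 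On $G\cap R_h^c$ no early ancestor is sufficiently displaced, and convexity of the Gaussian profile $x\mapsto\nu(A-x)$ forces some subtree at an intermediate scale to carry more than its fair share of $|Z_n|$, which costs $m^{-(\beta-1)h\sqrt n}$ in regime (i) (Pareto tail) or $p_1^{h\sqrt n}$ in regime (ii) (singleton survival). Summing the two terms via $\log(e^{-a}+e^{-b})\leq-\min(a,b)+\log 2$ and supremizing over $h$ gives $-\sup_h\Phi_i'(h)$.

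The main obstacle is the upper bound, specifically the control of $G\cap R_h^c$: one must rigorously rule out every configuration in which the empirical mass concentrates in $\sqrt nA$ while all generation-$h\sqrt n$ ancestors have typical displacements. This requires uniform Pareto-type estimates on $\max_v |Z_n^{(v)}|/|Z_n|$ coupled with the displacement control across all intermediate scales; the continuity assumption on $I_A(\cdot)$ is what allows the $\varepsilon$-cushion in $R_h$ to be absorbed on the $\sqrt n$ scale. The gap between the additive $\Phi_i$ and the minimum-based $\Phi_i'$ reflects the difference between realizing both mechanisms simultaneously (lower bound) and only needing a dichotomy (upper bound).
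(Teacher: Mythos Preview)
Your lower bound is close in spirit to the paper's, though the event $\{|Z_n^{(v)}|/|Z_n|\geq p\}$ is not the right one (you need the subtree of $v$ to carry a fraction close to $1$, not just $p$, since the complement may contribute nothing to $\sqrt nA$); the paper forces this by asking that at generation $t_n+1$ the single particle $u$ with $S_u$ near $x\sqrt n$ satisfies $|Z_1^u|>2M\sum_{v\neq u}|Z_1^v|$, exploiting the Pareto tail of $|Z_1|$ directly and then reducing to the harmonic moment $\mathbb{E}[|Z_{t_n}|^{1-\beta}]$.

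The real problem is your upper bound. The event $R_h=\{\exists v\in Z_{\lfloor h\sqrt n\rfloor}:S_v\geq(I_A(p)-\varepsilon)\sqrt n\}$ cannot be controlled at rate $h\Lambda(I_A(p)/h)$: many-to-one gives
\[
\mathbb{P}(R_h)\leq \mathbb{E}[|Z_{t_n}|]\,\mathbb{P}\bigl(S_{t_n}\geq(I_A(p)-\varepsilon)\sqrt n\bigr)\approx m^{h\sqrt n}e^{-h\sqrt n\,\Lambda(I_A(p)/h)},
\]
and the factor $m^{h\sqrt n}$ does not disappear under any truncation of $|Z_{t_n}|$---your truncation at $m^{(1+\eta)h\sqrt n}$ still leaves it. At the $h$ where the two pieces of $\Phi_1'$ balance, one has $\Lambda(I_A(p)/h)=(\beta-1)\log m$, so your bound on $\mathbb{P}(R_h)$ has exponent $h(2-\beta)\log m$, which is useless whenever $\beta<2$.

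The paper sidesteps this entirely by replacing ``existence of one displaced particle'' with ``a $\delta/2$-\emph{fraction} of particles displaced'': with $B_n=[-(I_A(p)-\eta)\sqrt n,(I_A(p)-\eta)\sqrt n]$ it writes
\[
\mathbb{P}(\bar Z_n(\sqrt nA)\geq p)\leq \mathbb{P}\bigl(\bar Z_{t_n}(B_n^c)\geq\tfrac{\delta}{2}\bigr)+\mathbb{P}\bigl(\bar Z_{t_n}(B_n^c)<\tfrac{\delta}{2},\ \bar Z_n(\sqrt nA)\geq p\bigr).
\]
Markov on the first term gives $\tfrac{2}{\delta}\mathbb{E}[\bar Z_{t_n}(B_n^c)]=\tfrac{2}{\delta}\nu_{t_n}(B_n^c)$, and since the \emph{fraction} has expectation equal to the one-particle probability there is no population factor; Cram\'er then yields exactly $e^{-h\sqrt n\,\Lambda((I_A(p)-\eta)/h)(1+o(1))}$. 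For the second term, on $\{\bar Z_{t_n}(B_n^c)<\delta/2\}$ one has $\frac{1}{|Z_{t_n}|}\sum_{z\in Z_{t_n}}\nu_{n-t_n}(\sqrt nA-S_z)\leq p-\delta/4$ by the definition of $I_A$, and the paper invokes a concentration lemma (their Lemma~2.4, a Fuk--Nagaev type bound) stating that for any $\xi$,
\[
\mathbb{P}\Bigl(\bar Z_{n-t_n}^{\xi}(\sqrt nA)\geq \tfrac{1}{|\xi|}\sum_{z\in\xi}\nu_{n-t_n}(\sqrt nA-S_z)+\tfrac{\delta}{4}\Bigr)\leq C_1|\xi|^{1-\beta},
\]
whence the second term is at most $C_1\mathbb{E}[|Z_{t_n}|^{1-\beta}]$, handled by harmonic-moment asymptotics. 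Your ``convexity of $x\mapsto\nu(A-x)$'' argument has no analogue of this concentration step (and $\nu(A-\cdot)$ is not convex); without it there is no mechanism linking $R_h^c$ to the $m^{-(\beta-1)h\sqrt n}$ cost.
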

\begin{remark}If $p_1>0$, $-\log{p_1}/\log m$  is the so-called Schr\"{o}der constant, which determines the asymptotic behaviour of the harmonic moments of $|Z_n|$ (see Lemma \ref{Neyharmonic} below). Furthermore, we will see that the asymptotic behaviour of $\mathbb{P}(\bar{Z}_n(\sqrt nA)\geq p)$ mainly depends on the harmonic moments.
\end{remark}
\begin{remark}
If $\beta\in(0,1)$, Athreya and Hong \cite{Athreya coalscence} showed that $\bar{Z}_n(\sqrt n(-\infty,y])$ converges in distribution to a Bernoulli random variable.
\end{remark}

The next theorem considers the case that the offspring law has Weibull tail. As we can see in the following, the decay scales are the same as the case that offspring law has exponential moment. However, the B\"{o}ttcher constant would appear in the rate function.
\begin{theorem}\label{main5}Take $p\in(\nu(A),1)$ such that $I_A(\cdot)$ is continuous at $p$ and $I_A(p)<\infty$. Suppose $\mathbb{P}(|Z_1|>x)\sim l_1e^{-l x^{\beta}}$ as $x\rightarrow\infty$ for some constants $\beta\in(0,1)$ and $l_1,~l\in(0,+\infty)$. Assume $p_1=0$.  \\
 (i) If $\text{ess}\sup X=L\in(0,+\infty)$, then
\begin{equation}\label{weibullbounded}
\lim_{n\rightarrow\infty}\frac{1}{\sqrt{n}}\log\left[-\log \mathbb{P}(\bar{Z}_n(\sqrt nA)\geq p)\right]=\frac{I_A(p)}{L} \frac{\rho\beta}{\beta+\rho-\beta\rho}\log m,
\end{equation}
where $\rho$ is the so-called B\"{o}ttcher constant such that $b=m^{\rho}$.\\
(ii) If $\mathbb{P}(X>x)=\Theta(1)e^{-\lambda x^{\alpha}}$ as $x\rightarrow\infty$ for some constants $\alpha\in(0,\infty)$ and $\lambda>0$, then
\begin{equation}\label{weibullweibull}
\lim\limits_{n\rightarrow\infty}\frac{(\log n)^{(\alpha-1)\vee 0}}{n^{\alpha/2}}
\log\mathbb{P}(\bar{Z}_n(\sqrt nA)\geq p)= -\lambda {I_A(p)}^{\alpha}\left(\frac{2\beta\rho\log m}{\alpha(\beta+\rho-\beta\rho)}\right)^{(\alpha-1)\vee 0}.
\end{equation}
\end{theorem}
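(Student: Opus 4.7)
My plan is to prove matching upper and lower bounds for each part via a three-mechanism ``winning strategy'' along a single spine, combining an offspring burst with an atypical sibling growth profile. The key analytic input will be the B\"ottcher left-tail estimate $\mathbb{P}(W\le\varepsilon)\asymp\exp(-C\varepsilon^{-\rho/(1-\rho)})$ for $W=\lim|Z_n|/m^n$, derived from the functional equation $\phi(mt)=f(\phi(t))$ for the Laplace transform $\phi$ of $W$ together with $f(s)\sim p_b s^b$ near $s=0$: iterating gives $-\log\phi(t)\asymp t^\rho$, whence the left tail via exponential Markov.

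For the lower bound I would construct a tilted event along a chosen lineage $\emptyset=P_0\to P_1\to\cdots\to P_k$ from three conditionally almost-independent components. \emph{(a) Displacement:} the spine reaches position $\sqrt n I_A(p)$ by generation $k$. In part~(i) with $|X|\le L$, this requires $k\ge\sqrt n I_A(p)/L$ near-maximal steps, at cost subdominant on the $\log[-\log(\cdot)]/\sqrt n$ scale; in part~(ii), using the Weibull step tail, one uses a single large jump when $\alpha\le 1$ or $k$ equal jumps when $\alpha>1$ (by convexity of $x\mapsto x^\alpha$), at cost $\exp(-\lambda I_A(p)^\alpha n^{\alpha/2}/k^{\alpha-1})$. \emph{(b) Burst:} $P_k$ has $N$ offspring at cost $\exp(-lN^\beta)$ via the offspring Weibull tail, producing a cluster of $\sim N m^{n-k}$ descendants around $\sqrt n I_A(p)$. \emph{(c) Sibling suppression:} the $j$-th sibling subtree's final size is bounded by $\varepsilon_j m^{n-j}$ at cost $\sim\varepsilon_j^{-\rho/(1-\rho)}$ each, the $k$ constraints factorizing by the branching property.

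The event yields $\bar Z_n(\sqrt n A)\ge p$ provided the dominance condition $\sum_j \varepsilon_j m^{k-j}\le cN$ holds. A Lagrange multiplier calculation gives the optimal profile $\varepsilon_j\propto Nm^{-k+(1-\rho)j}$ and total sibling cost $\sim N^{-\rho/(1-\rho)}m^{\rho k/(1-\rho)}$; balancing against the burst cost $lN^\beta$ yields $N^\ast\propto m^{\rho k/(\beta+\rho-\beta\rho)}$ and minimum combined cost $\sim m^{\beta\rho k/(\beta+\rho-\beta\rho)}$. For part~(i), the motion cost is subdominant and $k=\sqrt n I_A(p)/L$ is forced, producing the claimed rate. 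For part~(ii), adding the motion cost and optimizing over $k$ gives $k^\ast=1$ when $\alpha\le 1$ (rate $\lambda I_A(p)^\alpha$) and $k^\ast\sim\alpha(\beta+\rho-\beta\rho)\log n/(2\beta\rho\log m)$ when $\alpha>1$, yielding the $(\log n)^{\alpha-1}$ factor and the explicit constant.

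For the upper bound, I would stratify on $\{\bar Z_n(\sqrt n A)\ge p\}$ by intermediate generation $k$ and ancestor position: there must exist some $k$ and some $P_k\in Z_k$ with $S_{P_k}\ge\sqrt n I_A(p)-o(\sqrt n)$ whose subtree carries fraction at least $(p-\nu(A))/(1-\nu(A))$ of $|Z_n|$. The ``existence'' part is controlled by Biggins' many-to-one formula, and the ``sibling smallness'' part by an exponential-Markov bound on $\exp\bigl(-\sum_j t_j |Z_n^{(\mathrm{sib},j)}|/m^{n-j}\bigr)$ with weights $t_j$ calibrated to the Lagrange optimum, using the Laplace transform estimate above together with the harmonic-moment asymptotics of Lemma~\ref{Neyharmonic}; matching the lower bound's optimizer $(k^\ast,N^\ast)$ completes the proof. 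The principal technical obstacle will be matching the sharp constant $\rho/(\beta+\rho-\beta\rho)$ in both directions: a naive Markov/harmonic-moment bound loses the $\rho$-factor and requires the full exponential-Markov calibration, while on the lower side a careful coupling is needed to certify the additive cost across the $k$ sibling subtrees. A secondary challenge is the $(\log n)^{\alpha-1}$ correction in part~(ii) when $\alpha>1$, which requires second-order precision in the optimization.
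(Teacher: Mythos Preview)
Your lower-bound construction is correct in spirit and recovers the right optimizer, but it is considerably more elaborate than the paper's argument. The paper does \emph{not} optimize over a sibling-suppression profile $(\varepsilon_j)_{j\le k}$; instead it uses a single ``burst dominates everything'' event at time $t_n+1$: one particle $u$ reaches the target window by time $t_n$ and then has $|Z_1^u|>2M\sum_{v\neq u}|Z_1^v|$. Conditioning on $\mathcal F_{t_n}$ and using the Weibull tail of $|Z_1|$ reduces the cost of this event to $\mathbb E\bigl[e^{-l'|Z_{t_n+1}|^\beta}\bigr]$, and the whole B\"ottcher optimization you carry out by hand is already packaged inside the asymptotic
\[
\frac1n\log\bigl[-\log\mathbb E[e^{-l|Z_n|^\beta}]\bigr]\to\frac{\beta\rho}{\beta+\rho-\beta\rho}\log m
\]
(Lemma~\ref{Weibullmoment}, proved via the Fleischmann--Wachtel local limit theorem for $\mathbb P(|Z_n|=k)$ in the B\"ottcher regime). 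Your Lagrange calculation is essentially a re-derivation of the exponent in that lemma; using the lemma directly shortcuts all of it.

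The upper bound, however, has a genuine gap. Your stratification assumes that on $\{\bar Z_n(\sqrt nA)\ge p\}$ there must exist a single ancestor $P_k$ whose subtree carries a fixed positive fraction of $|Z_n|$ and whose position is within $o(\sqrt n)$ of $\sqrt n\,I_A(p)$. This is not justified: the excess mass in $\sqrt nA$ can come from a diffuse collection of ancestors, none of whose subtrees individually dominates, and in part~(i) no particle can even be near $\sqrt n\,I_A(p)$ before time $\sqrt n\,I_A(p)/L$, at which point there are already $\ge b^{t_n}$ of them. The paper circumvents this entirely by proving a uniform concentration estimate (Lemma~\ref{Weibulllemma}): for every deterministic configuration $\xi$,
\[
\mathbb P\Bigl(\bar Z_n^{\xi}(A)\ge \tfrac1{|\xi|}\sum_{x\in\xi}\nu_n(A-S_x)+\epsilon\Bigr)\le C_2\,e^{-C_3|\xi|^\beta},
\]
obtained from Nagaev-type inequalities together with the uniform Weibull tail of $W_n$ (Lemma~\ref{Denis}). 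Applying this at the intermediate time $t_n$ (with $t_n\sim I_A(p)\sqrt n/L$ in~(i), $t_n\sim t\log n$ in~(ii)) gives $\mathbb P(\bar Z_n(\sqrt nA)\ge p)\le C\,\mathbb E[e^{-C_3|Z_{t_n}|^\beta}]$ plus a displacement term, and Lemma~\ref{Weibullmoment} closes the argument with the sharp constant. Your exponential-Markov scheme with calibrated weights $t_j$ does not obviously produce the $e^{-C|\xi|^\beta}$ bound needed here; the missing ingredient is precisely this heavy-tail analogue of Louidor--Perkins' Lemma~2.4.
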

\begin{remark}For the Schr\"{o}der case (i.e., $p_1>0$), one can check that: combining the proof of \cite[Theorem 1.1]{ChenHe} and Lemma \ref{Weibulllemma} below, on can generalize \cite[Theorem 1.1]{ChenHe} to the case that $|Z_1|$ has  Weibull tail.
\end{remark}
As we can see in the above, when the step size has exponential moment, the decay rate is very sensitive to the tail of offspring law. However, in the next we will see that when the step size has Pareto tail, the offspring law seems to have little effect on the decay rate.
\begin{theorem}\label{main3}Take $p\in(\nu(A),1)$ such that $I_A(\cdot)$ is continuous at $p$ and $I_A(p)<\infty$. Suppose   $\mathbb{E}[|Z_1|^{\beta}]<\infty$, for some $\beta>1$ and $b<B$. Assume $\mathbb{P}(X>x)\sim \kappa x^{-\alpha}$ as $x\rightarrow\infty$ for some constants $\kappa>0$ and $\alpha>2$. Then
$$\lim\limits_{n\rightarrow\infty}\frac{1}{\log n}\log\mathbb{P}(\bar{Z}_n(\sqrt nA)\geq p)=-\frac{\alpha}{2}.$$\end{theorem}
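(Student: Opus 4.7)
The proof establishes matching upper and lower bounds at the rate $n^{-\alpha/2}$. Both rest on the \emph{one big jump} heuristic: a single step of magnitude $\Theta(\sqrt{n})$ at an early generation occurs with probability $\Theta(n^{-\alpha/2})$ by the Pareto tail assumption, and a suitably placed such jump can tilt the empirical distribution past $p$.

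\textbf{Lower bound.} Set $\theta^{*}:=(p-\nu(A))/(1-\nu(A))\in(0,1)$, pick $\theta_{0}\in(\theta^{*},1)$, and choose a finite $c$ of the appropriate sign (using the symmetry of $X$) so that $\theta_{0}\nu(A-c)+(1-\theta_{0})\nu(A)>p$; this is possible because the LHS tends to $\theta_{0}+(1-\theta_{0})\nu(A)>p$ as $|c|\to\infty$. Consider the event that (a) $|Z_{1}|=b$, with gen-$1$ particles $u_{1},\dots,u_{b}$, (b) the step $X_{u_{1}}$ of $u_{1}$ lies in $[c\sqrt{n},(c+1)\sqrt{n}]$, and (c) the Kesten--Stigum martingale limits of the $b$ gen-$1$ subtrees satisfy $W^{(u_{1})}/\sum_{i=1}^{b}W^{(u_{i})}\geq\theta_{0}$. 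By the BRW construction these three sub-events are jointly independent after conditioning on $|Z_{1}|$, with probabilities $p_{b}>0$, $\sim\tfrac{1}{2}\kappa(c^{-\alpha}-(c+1)^{-\alpha})n^{-\alpha/2}$, and some $q>0$ --- the last because the ratio has positive mass near $1$ under the non-degeneracy implied by $b<B$ and standard Kesten--Stigum theory. Applying Biggins' CLT to each gen-$1$ subtree on this event yields $\bar{Z}_{n}(\sqrt{n}A)\to\theta\,\nu(A-X_{u_{1}}/\sqrt{n})+(1-\theta)\nu(A)>p$, so $\bar{Z}_{n}(\sqrt{n}A)\geq p$ for all large $n$. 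Hence $\liminf_{n\to\infty}(\log n)^{-1}\log\mathbb{P}(\bar{Z}_{n}(\sqrt{n}A)\geq p)\geq -\alpha/2$.

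\textbf{Upper bound.} Fix $L\gg 1$ and $K$ with $m^{-K}<(p-\nu(A))/2$. Decompose $\{\bar{Z}_{n}(\sqrt{n}A)\geq p\}=E_{\mathrm{big}}\cup E_{\mathrm{small}}$, where $E_{\mathrm{big}}$ is the event that some step in the first $K$ generations exceeds $L\sqrt{n}$ in magnitude. A Markov bound on the expected number of such big early steps gives $\mathbb{P}(E_{\mathrm{big}})\leq K m^{K}\kappa L^{-\alpha}n^{-\alpha/2}=O(n^{-\alpha/2})$. On $E_{\mathrm{small}}$, decompose along gen-$K$ particles,
\begin{equation*}
\bar{Z}_{n}(\sqrt{n}A)=\sum_{u\in Z_{K}}\frac{|Z_{n-K}^{(u)}|}{|Z_{n}|}\cdot\frac{Z_{n-K}^{(u)}(\sqrt{n}A-S_{u})}{|Z_{n-K}^{(u)}|},
\end{equation*}
and apply Biggins' theorem to each subtree so that the inner ratio converges a.s.\ to $\nu(A-S_{u}/\sqrt{n})$. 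On $E_{\mathrm{small}}$ every first-$K$-gen position satisfies $|S_{u}|\leq KL\sqrt{n}$, and sharper still, $\mathbb{P}(|S_{u}|/\sqrt{n}\geq r)=O(n^{-\alpha/2})$ per particle by the single-big-jump estimate for truncated Pareto sums. Combined with a union bound over the gen-$K$ particles, whose number is controlled by the moment assumption $\mathbb{E}[|Z_{1}|^{\beta}]<\infty$, this gives $\mathbb{P}(E_{\mathrm{small}})=o(n^{-\alpha/2})$, hence $\limsup_{n\to\infty}(\log n)^{-1}\log\mathbb{P}(\bar{Z}_{n}(\sqrt{n}A)\geq p)\leq -\alpha/2$.

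\textbf{Main obstacle.} The delicate step is the quantitative bound on $\mathbb{P}(E_{\mathrm{small}})$: one must show that, absent any atypical early step, the probability of $\bar{Z}_{n}(\sqrt{n}A)\geq p$ decays strictly faster than $n^{-\alpha/2}$. This requires propagating the one-big-jump principle inside each gen-$K$ subtree and ruling out that many medium-sized jumps across different subtrees could conspire to inflate the empirical fraction by the constant $p-\nu(A)$. The moment assumption $\mathbb{E}[|Z_{1}|^{\beta}]<\infty$ keeps the gen-$K$ union bounds sharp, while $b<B$ ensures the non-degeneracy of the subtree-weight ratios central to the lower bound.
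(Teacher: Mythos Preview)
Your lower bound is essentially the paper's: force $|Z_1|=b$, send one first-generation particle to height $\Theta(\sqrt n)$ at cost $\Theta(n^{-\alpha/2})$, and use the non-degeneracy of $W$ (from $b<B$) to ensure that the limiting weighted average exceeds $p$ with positive probability. One correction: your justification ``the LHS tends to $\theta_0+(1-\theta_0)\nu(A)$ as $|c|\to\infty$'' presumes $\nu(A-c)\to 1$, which fails for bounded $A\in\mathcal A$. The fix is to invoke $I_A(p)<\infty$ directly: first pick $c$ with $\nu(A-c)>p$, then choose $\theta_0<1$ close enough to $1$ so that $\theta_0\nu(A-c)+(1-\theta_0)\nu(A)>p$. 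With that swap the argument matches the paper's.

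Your upper bound, however, has a genuine gap that you yourself flag as the ``main obstacle'' without resolving. Decomposing at a \emph{fixed} generation $K$ and invoking Biggins' almost-sure CLT in each subtree gives no rate: conditional on a generation-$K$ configuration with all positions in $[-r\sqrt n,r\sqrt n]$, you only know $\bar Z_n(\sqrt nA)\to(\text{something}<p)$ a.s., which does not force $\mathbb P(\bar Z_n(\sqrt nA)\ge p)=o(n^{-\alpha/2})$. Indeed, the big jump responsible for the deviation can occur at any generation after $K$, so $\mathbb P(E_{\mathrm{small}}\cap\{\bar Z_n(\sqrt nA)\ge p\})$ is itself of order $n^{-\alpha/2}$, not smaller. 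Your union bound over gen-$K$ particles likewise yields $O(n^{-\alpha/2})$, not $o(n^{-\alpha/2})$.

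The paper avoids this by taking the intermediate time to \emph{grow}: set $t_n=\lfloor c\log n\rfloor$ and apply the quantitative concentration estimate (Lemma~\ref{louidorlem}), which bounds the conditional deviation probability by $C_1|Z_{t_n}|^{1-\beta}$ uniformly in $n$. Together with the harmonic-moment asymptotics $\mathbb E[|Z_{t_n}|^{1-\beta}]\asymp m^{-(\beta-1)t_n}\vee p_1^{t_n}$ (Lemma~\ref{Neyharmonic}), choosing $c$ large makes this term $o(n^{-\alpha/2})$. The remaining term $\mathbb P(S_{t_n}\ge (I_A(p)-\eta)\sqrt n)$ is handled by a Nagaev-type bound for heavy-tailed sums and contributes $O(n^{-\alpha/2}\log n)$. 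The point is that the $|\xi|^{1-\beta}$ bound is useless when $|\xi|=|Z_K|=O(1)$; it only bites once $|Z_{t_n}|$ is polynomially large in $n$, which requires $t_n\to\infty$. This is the missing idea in your sketch.
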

\begin{remark} If $b=B$, the above result is still true, provided $A$ is unbounded and $p\in\left(\nu(A),\nu(A)+b^{-1}\left(1-\nu(A)\right)\right)$.
\end{remark}
The above theorems all assume that $I_A(p)<\infty$, and we have seen, in this case, the law of step size plays an important role on the decay rate. However, in the following we will see that if $I_A(p)=\infty$, the decay rate mainly depends on the offspring law.
\begin{theorem}\label{main4} Suppose $I_A(p)=\infty$ and $J_A(\cdot)$ is continuous at $p$ for $p\in(\nu(A),1)$. \\
(i) If $\mathbb{P}(|Z_1|>x)=\Theta(1)x^{-\beta}$ as $x\rightarrow+\infty$ for some constant $\beta>1$, then
$$
\lim\limits_{n\rightarrow\infty}\frac{1}{n}\log\mathbb{P}(\bar{Z}_n(\sqrt nA)\geq p)
=\begin{cases}
-J_A(p)(\beta-1)\log m, &0<\beta-1<-\frac{\log{p_1}}{\log m}; \cr
J_A(p)\log{p_1}, &\beta-1\geq -\frac{\log{p_1}}{\log m}.
\end{cases}
$$
(ii) Assume $\mathbb{P}(|Z_1|>x)\sim l_1e^{-l x^{\beta}}$ as $x\rightarrow\infty$ for some constants $\beta\in(0,1)$ and $l_1,~l\in(0,+\infty)$.\par
(iia) If $p_1>0$, then
$$
\lim\limits_{n\rightarrow\infty}\frac{1}{n}\log\mathbb{P}(\bar{Z}_n(\sqrt nA)\geq p)=J_A(p)\log{p_1}.
$$\par
(iib) If $p_1=0$, then
$$\lim_{n\rightarrow\infty}\frac{1}{n}\log\left[-\log \mathbb{P}(\bar{Z}_n(\sqrt nA)\geq p)\right]=J_A(p) \frac{\rho\beta}{\beta+\rho-\beta\rho}\log m.
$$
\end{theorem}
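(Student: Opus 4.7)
The plan is to prove Theorem~\ref{main4} by identifying, in each sub-case, the cheapest ``bottleneck'' event in the early genealogy that both matches the claimed rate and essentially forces $\bar Z_n(\sqrt n A)\ge p$. The unifying heuristic is: since $I_A(p)=\infty$, no translate of $A$ has Gaussian mass $\ge p$, so the only way to achieve $\bar Z_n(\sqrt n A)\ge p$ is to reduce the effective variance of the rescaled empirical distribution from $1$ to at most $1-J_A(p)$. Suppressing (or minimizing) branching for the first $rn$ generations removes the variance contribution of those generations, so the remaining $(1-r)n$ generations give a Gaussian spread of rescaled variance $1-r$; this produces mass $\sup_x\nu((A-x)/\sqrt{1-r})\ge p$ exactly at $r=J_A(p)$. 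The bottleneck's cost is then the lower-tail / harmonic-moment rate of $|Z_n|$ in the appropriate regime.

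For the lower bound I would fix $r>J_A(p)$ and $y\in\mathbb{R}$ with $\nu((A-y)/\sqrt{1-r})>p$, available by continuity of $J_A$ at $p$. The event I construct asks that $Z_{\lfloor rn\rfloor}$ be atypically small and concentrated near $y\sqrt n$, and that the BRW then evolve normally; applying Biggins' Theorem~6 to each sub-BRW of length $(1-r)n$ gives $\bar Z_n(\sqrt n A)\to\nu((A-y)/\sqrt{1-r})>p$ almost surely, hence with conditional probability tending to $1$. In cases (i) and (iia) the bottleneck is just ``$|Z_{rn}|$ stays bounded'', whose probability is governed by the harmonic moment of $|Z_n|$ (Lemma~\ref{Neyharmonic}) and is $\asymp m^{-rn(\beta-1)}\vee p_1^{rn}$ in case (i) and $\asymp p_1^{rn}$ in case (iia). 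In case (iib) the cheapest bottleneck is a hybrid event combining minimum branching with a single large Weibull branching event, whose cost optimized over its two free parameters is $-\log\mathbb{P}\sim m^{\alpha rn}$ with $\alpha=\rho\beta/(\beta+\rho-\beta\rho)$. The step-size cost of steering the ancestor to $y\sqrt n$ is at most polynomial in $n$ by the local CLT and is therefore absorbed. Sending $r\downarrow J_A(p)$ closes the lower bound.

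For the upper bound I would use the decomposition $\bar Z_n(\sqrt n A)=\sum_{x\in Z_{\lfloor rn\rfloor}}(|Z_n^{x}|/|Z_n|)\,\bar Z_n^{x}(\sqrt n A)$, where $Z_n^{x}$ denotes the sub-BRW descending from $x$. For $r<J_A(p)$, off the bottleneck event, almost-sure convergence of each $\bar Z_n^{x}(\sqrt n A)$ to $\nu((A-S_x/\sqrt n)/\sqrt{1-r})$ (Biggins), together with the convergence of the rescaled empirical measure of $Z_{\lfloor rn\rfloor}$ to the centred Gaussian $\nu_r$ of variance $r$, gives
\[
\bar Z_n(\sqrt n A)\;\longrightarrow\;\int_{\mathbb{R}}\nu\!\left(\frac{A-x}{\sqrt{1-r}}\right)\nu_r(dx)\;=\;\nu(A)\;<\;p
\]
by a standard convolution identity. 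Hence $\{\bar Z_n(\sqrt n A)\ge p\}$ is contained, up to a probability negligible on the relevant scale, in the lower-tail event for $|Z_{\lfloor rn\rfloor}|$, whose probability is controlled by the same harmonic / Weibull-tail lemmas used in the lower bound. Letting $r\uparrow J_A(p)$ then closes the gap.

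The main obstacle is case (iib). A pure minimum-branching bottleneck produces only the suboptimal exponent $\rho$, and a pure ``single huge Weibull branching event'' produces only the exponent $\beta$; the correct exponent $\alpha=\rho\beta/(\beta+\rho-\beta\rho)$ is strictly smaller than both $\rho$ and $\beta$ and is achieved solely by an optimized combination of the two mechanisms. Constructing the hybrid event explicitly (lower bound) and proving the matching upper bound require a sharp two-sided lower-tail estimate for $|Z_n|$ under B\"ottcher--Weibull offspring, of the same flavour as the estimate used for Theorem~\ref{main5} via Lemma~\ref{Weibulllemma}; I expect this to be the most delicate technical input.
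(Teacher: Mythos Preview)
Your heuristic and the overall decomposition at time $t_n=\lfloor rn\rfloor$ are correct, and your treatment of cases (iia) and of case (i) when $\beta-1\ge -\log p_1/\log m$ is essentially the paper's argument. The genuine gap is in case (i) when $\beta-1<-\log p_1/\log m$ (which in particular includes $p_1=0$), and it affects both bounds.

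\textbf{Lower bound.} You identify the bottleneck as ``$|Z_{rn}|$ stays bounded'' and claim its probability is $\asymp m^{-rn(\beta-1)}\vee p_1^{\,rn}$. But $\mathbb{P}(|Z_{rn}|\le K)$ depends only on the small values of the offspring law; it is $\asymp p_1^{\,rn}$ when $p_1>0$ and decays doubly exponentially (indeed $|Z_{rn}|\ge b^{rn}$ a.s.) when $p_1=0$. Neither yields $m^{-rn(\beta-1)}$. The quantity $m^{-rn(\beta-1)}$ is the harmonic moment $\mathbb{E}[|Z_{rn}|^{-(\beta-1)}]$ in this regime, and it is \emph{not} carried by atypically small $|Z_{rn}|$ but by typical $|Z_{rn}|\asymp m^{rn}$. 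The paper's mechanism is different: let $Z_{t_n}$ grow typically, pick a particle $u$ near $x\sqrt n$ (cost $\Theta(1)$ by the CLT since $t_n\asymp n$), and then use the \emph{Pareto tail} of $|Z_1|$ to force $|Z_1^u|>2M\sum_{v\ne u}|Z_1^v|$. Conditioning on $\mathcal F_{t_n}$, this last event has probability $\gtrsim (2M|Z_{t_n+1}|)^{-\beta}$, so the total cost is
\[
\mathbb{E}\!\left[\frac{|Z_{t_n}|}{|Z_{t_n+1}|^{\beta}}\right]\;\asymp\;m^{-(\beta-1)t_n}.
\]
This Pareto-tail ``one child dominates the whole generation'' event is the missing idea; without it you cannot reach the rate $-J_A(p)(\beta-1)\log m$, and when $p_1=0$ your proposed bottleneck has probability zero.

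\textbf{Upper bound.} Your argument ``off the bottleneck event, a.s.\ convergence gives $\bar Z_n(\sqrt nA)\to\nu(A)<p$, hence $\{\bar Z_n\ge p\}$ is contained in a lower-tail event for $|Z_{rn}|$'' is too soft: almost-sure convergence gives no rate, and the complementary lower-tail event again has the wrong probability in the sub-Schr\"oder regime. The paper instead proves a \emph{quantitative} concentration estimate (Lemma~\ref{louidorlem}): for every $\xi$,
\[
\mathbb{P}\!\left(\bar Z^{\xi}_{n-t_n}(\sqrt nA)\ge \tfrac{1}{|\xi|}\sum_{z\in\xi}\nu_{n-t_n}(\sqrt nA-S_z)+\tfrac{\delta}{2}\right)\le C_1|\xi|^{1-\beta},
\]
uniformly in $n$ and $A$. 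Since for $r<J_A(p)$ one has $\sup_y\nu_{n-t_n}(\sqrt nA-y)\le p-\delta$ for large $n$, integrating over $Z_{t_n}$ gives $\mathbb{P}(\bar Z_n\ge p)\le C_1\,\mathbb{E}[|Z_{t_n}|^{-(\beta-1)}]$, and Lemma~\ref{Neyharmonic} closes the argument. The point is that the bound $|\xi|^{1-\beta}$ holds for \emph{all} $\xi$, including the typical $|\xi|\asymp m^{t_n}$, and it is precisely this power (tied to the Pareto exponent~$\beta$) that produces the harmonic-moment rate. For case (ii) the analogue is Lemma~\ref{Weibulllemma}, giving a stretched-exponential bound in $|\xi|$, whose expectation is handled by Lemma~\ref{Weibullmoment}; this also replaces your proposed hybrid construction in (iib) by a single clean moment computation.
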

\begin{remark}In fact, Theorem \ref{main3} and Theorem \ref{main4} can also be generalized to the case that the step size $X$ is in the domain attraction of an $\alpha$-stable law with $\alpha\in(0,2]$. The results and proofs are similar: the mainly changes are to replace $\nu$ with the $\alpha$-stable law and replace $\sqrt n$ with $\inf\{x:\mathbb{P}(|X|>x)<n^{-1}\}$ (which is a regular variation sequence with index $1/\alpha$).
\end{remark}
\textbf{Idea of proofs}. Although, the proofs of above four theorems differ from cases to cases, the basic strategy behind them is the same. Let us discuss briefly about it. If $I_A(p)<\infty$, we let $x$ be the number realizing the infimum in the definition of $I_A(p)$, otherwise we let $x$ realizes the supremun in the definition of $J_A(p)$. The lower bound aims at achieving the event $\{\bar{Z}_n(\sqrt nA)\geq p)\}$ in the most effortless way. For Theorem \ref{main1}, \ref{main5} and \ref{main4}, our strategy is to let one particle reach around $x\sqrt n$ at some intermediate generation $t_n$, and then force its children to dominate the population size at time $t_n+1$ (since in these theorems, offspring law has heavy tail, this can be made with relative high probability). Finally, optimizing for $t_n$ yields the desired lower bound. For Theorem \ref{main3}, the step size has heavy tail, so particles can reach high position in a short time. Therefore, in order to achieve $\{\bar{Z}_n(\sqrt nA)\geq p)\}$, we can let one particle reach around $x\sqrt n$ in the first generation and other particles stay around the origin.\par
For the upper bound, all theorems use a similar idea on the whole, which is borrowed from Louidor and Perkins \cite{Louidor}. Our main tasks are to generalize their Lemma 2.4 in the heavy tail case and to study the asymptotic behaviours of the harmonic moments and stretched exponential moments of $|Z_n|$.\par
The rest of this paper is organised as follows. In Section \ref{prelimars}, we present some preliminary results, which will be frequently used in our proofs. We consider the offspring law has Pareto tail in Section \ref{offspringpareto}, and has Weibull tail in Section \ref{offspringweibull}. Section \ref{Paretostepsize} is devoted to study the case that the step size has Pareto tail. The last section considers the case that  $I_A(p)=+\infty$. In this paper, we always use $C$, $C'$, $C_0$, $C_1$... and $c_1$, $c_2$,...to denote positive constants. And, as usual, denote by $C(\epsilon,M)$ (or $C_{\epsilon,M}$) a positive constant depending only on $\epsilon$ and $M$. And by convention, $f(x)=\Theta(1)g(x)$ as $x\rightarrow+\infty$ means there exist constants  $C\geq C'>0$ such that $C'\leq|f(x)/g(x)|\leq C$ for all $x>1$. $f(x)\sim g(x)$ means $\lim\limits_{x\rightarrow\infty} f(x)/g(x)=1$.
\section{Preliminaries}\label{prelimars}
In the following, we mainly concentrate on the offspring law $|Z_1|$ (or step size $X$) has two typical heavy tails: Weibull tail and Pareto tail. For convenience, we write $|Z_1|\sim$Pareto$(\beta)$, if $\mathbb{P}(|Z_1|>x)=\Theta(1)x^{-\beta}$ as $x\rightarrow+\infty$ for some constant $\beta>1$. And write $|Z_1|\sim$Weibull$(\beta)$, if $\mathbb{P}(|Z_1|>x)\sim l_1e^{-l x^{\beta}}$ as $x\rightarrow\infty$ for some constants $\beta\in(0,1)$ and $l_1,~l\in(0,+\infty)$. We denote by $\nu_n$ the distribution of $S_n:=\sum^n\limits_{i=1}X_i$, where $\{X_i\}_{i\geq 1}$ are i.i.d copies of the step size $X$. Write $W_n:=|Z_n|/m^n$. From \cite[Theorem 1, Theorem 3]{Denisov GW heavy tail} and \cite[Theorem 1]{Liumoment}, we have the following uniform bounds for $W_n$.
\begin{lemma}\label{Denis}If $|Z_1|\sim$Pareto$(\beta)$ for some $\beta>1$, then there exist constants $0<c_1<c_2<\infty$ such that
    $$c_1x^{-\beta}\leq\mathbb{P}(W_n>x)\leq c_2x^{-\beta},~\text{for all}~x>1~\text{and}~n\geq 1.$$
Hence, for $\alpha\in[1,\beta)$,
$$\sup_{n}\mathbb{E}[W^{\alpha}_n]<\infty.$$
If $|Z_1|\sim$Weibull$(\beta)$ for some $\beta\in(0,1)$, then for every $\epsilon\in(0,1)$, there exist constants $c_{\epsilon}>c'_{\epsilon}>0$ depending only on $\epsilon$ such that
   $$ c'_{\epsilon}e^{-l((m+\epsilon)x)^\beta}\leq\mathbb{P}(W_n>x)\leq c_{\epsilon}e^{-l\left((m-\epsilon)x\right)^\beta},~\text{for all}~x>0~\text{and}~n\geq 1.$$
Hence, for $\alpha\in(0,\beta)$ and $\theta>0$,
$$\sup_{n}\mathbb{E}\left[e^{\theta W_n^{\alpha}}\right]<\infty.$$
If $\mathbb{E}[|Z_1|^{\beta}]<\infty$, then
\begin{equation} \label{moments}
\mathbb{E}\left[\sup_{n\geq 1}W^{\beta}_n\right]<\infty.
\end{equation}
\end{lemma}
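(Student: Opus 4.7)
The plan is to derive all three parts from the fundamental branching recursion
$$W_{n+1} = \frac{1}{m}\sum_{i=1}^{|Z_1|} W_n^{(i)},$$
where $(W_n^{(i)})_{i\geq 1}$ are i.i.d.\ copies of $W_n$ independent of $|Z_1|$, together with the martingale identity $\mathbb{E}[W_n]=1$. I would combine this decomposition with an induction on $n$ for the tail estimates in (i)--(ii), and with a standard martingale moment argument for (iii).

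For part (i), the Pareto case, I would induct on $n$: assuming $\mathbb{P}(W_n>x)\leq c\,x^{-\beta}$ for all earlier indices, I split according to whether $|Z_1|>\eta x$ or $|Z_1|\leq\eta x$. The former contributes $\Theta(1)(\eta x)^{-\beta}$ directly from the offspring tail. The latter can be estimated by an $\alpha$-th Markov bound with $\alpha\in(1,\beta)$ close to $\beta$, using that the inductive hypothesis supplies a uniform bound on $\mathbb{E}[W_n^\alpha]$; choosing $\eta$ small and $c$ large closes the induction. For the lower bound I would use the single event $\{|Z_1|>2mx\}\cap\{\tfrac{1}{|Z_1|}\sum_{i} W_{n-1}^{(i)}\geq 1/2\}$: the first factor has probability $\Theta(1)x^{-\beta}$, and conditionally on $|Z_1|=k$ large, a Chebyshev-type bound combined with the uniform $\alpha$-moment control forces the second to have probability bounded below uniformly in $n$. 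The moment bound $\sup_n\mathbb{E}[W_n^\alpha]<\infty$ then drops out by integrating the tail.

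Part (ii), the Weibull case, follows the same template, but the truncation is set at $K=\eta x^{\gamma}$ with $\gamma\in(\beta,1)$ tuned so that $\mathbb{P}(|Z_1|>K)$ matches the target $e^{-l((m-\epsilon)x)^\beta}$; the complementary contribution is then controlled by an exponential Markov bound using a uniform stretched-exponential moment $\mathbb{E}[e^{\theta W_n^\alpha}]$ for $\alpha<\beta$. For the lower bound it suffices to use $\{|Z_1|>(m+\epsilon/2)x\}$ together with concentration of $\tfrac{1}{|Z_1|}\sum_{i} W_{n-1}^{(i)}$ near $1$. Integrating the resulting Weibull tail against $e^{\theta W_n^\alpha}$ yields the claimed stretched-exponential moment bound.

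For part (iii), I would use a pure martingale argument. Applying the von Bahr--Esseen inequality for $\beta\in(1,2]$, or the Marcinkiewicz--Zygmund--Rosenthal inequality for $\beta>2$, to the centered form of the recursion yields
$$\mathbb{E}[W_{n+1}^\beta]-\mathbb{E}[W_n^\beta]\leq C\, m^{-(n+1)(\beta-1)}\mathbb{E}[|Z_1|^\beta],$$
which is summable, so $\sup_n\mathbb{E}[W_n^\beta]<\infty$. Since $W_n$ is a non-negative martingale, Doob's $L^\beta$ maximal inequality then promotes this to $\mathbb{E}[\sup_n W_n^\beta]<\infty$. The main obstacle is securing the uniform-in-$n$ constants in parts (i) and (ii): the inductive step must simultaneously control the big-jump contribution (one offspring being atypically large) and the bulk sum contribution (many ordinary offspring), and the tail constants together with the intermediate moment bounds must close up under the recursion. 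This tension is exactly the content of the Denisov--Korshunov--Wachtel theorems cited above, and a fully self-contained exposition would require careful bookkeeping of the truncation thresholds in each tail regime.
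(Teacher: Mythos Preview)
The paper does not actually prove this lemma: it simply records the conclusions of \cite[Theorems~1 and~3]{Denisov GW heavy tail} for the Pareto and Weibull tails and of \cite[Theorem~1]{Liumoment} for the supremum moment. Your proposal therefore goes beyond the paper by sketching the underlying arguments, and for part~(iii) your sketch is correct and standard: a von Bahr--Esseen/Rosenthal bound on the centred recursion produces the summable increment $m^{-(n+1)(\beta-1)}$, and Doob's $L^\beta$ maximal inequality upgrades $\sup_n\mathbb{E}[W_n^\beta]<\infty$ to $\mathbb{E}[\sup_n W_n^\beta]<\infty$.

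For parts~(i)--(ii), however, the upper-bound step as you have written it does not close. On the event $\{|Z_1|\le\eta x\}$ you propose to control $\mathbb{P}\bigl(\tfrac{1}{m}\sum_{i\le|Z_1|}W_n^{(i)}>x\bigr)$ by an $\alpha$-th moment Markov bound with $\alpha<\beta$; but that yields a term of order $x^{-\alpha}$, not $x^{-\beta}$, and taking $\alpha$ ``close to $\beta$'' does not help, since for any fixed $\alpha<\beta$ the ratio $x^{-\alpha}/x^{-\beta}\to\infty$ and no choice of the constants $\eta,c$ can absorb it uniformly over $x>1$. To recover the rate $x^{-\beta}$ one must feed the inductive \emph{tail} hypothesis back in directly---for instance via a Fuk--Nagaev inequality such as \eqref{Nagaevinequailty}, splitting into ``one summand $W_n^{(i)}$ exceeds a fraction of $mx$'' (which inherits the $x^{-\beta}$ tail) plus a sub-polynomial remainder---rather than a raw moment bound. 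The same issue arises in the Weibull case: a stretched-exponential Markov bound with exponent $\alpha<\beta$ gives $e^{-cx^{\alpha}}$, which cannot be placed under $c_\epsilon e^{-l((m-\epsilon)x)^\beta}$ for all $x$. Your final paragraph correctly identifies the ``big jump versus bulk'' tension as the crux, but the specific tool you name for the bulk is too weak; the actual Denisov--Korshunov--Wachtel argument replaces it by a large-deviation inequality that preserves the tail index.
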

The following lemma gives the asymptotic behaviour of harmonic moments; see \cite[Theorem 1]{Ney harmonic}.
 \begin{lemma}\label{Neyharmonic}If $\mathbb{E}[|Z_1|]<\infty$ and $r>0$, then
 $$\lim\limits_{n\rightarrow\infty}\mathbb{E}\left[{|Z_n|^{-r}}\right]{A_n(r)}=C_0,$$
where $C_0\in(0,+\infty)$ is a constant depending only on $\{p_k\}_{k\geq0}$ and $r$, and
$$
A_n(r)=\left\{
\begin{array}{rcl}
p^{-n}_1,&& {r>-\frac{\log{p_1}}{\log m}};\\
np^{-n}_1,&& {r=-\frac{\log{p_1}}{\log m}};\\
m^{nr},&& {r<-\frac{\log{p_1}}{\log m}}.
\end{array}\right.
$$
 \end{lemma}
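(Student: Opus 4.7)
The natural starting point is the Mellin/Laplace representation
$$\mathbb{E}[|Z_n|^{-r}] = \frac{1}{\Gamma(r)}\int_0^{\infty}t^{r-1}f_n(e^{-t})\,dt,$$
where $f_n(s) = \mathbb{E}[s^{|Z_n|}]$ is the probability generating function of $|Z_n|$. The plan is to analyse the integral by splitting it into a region near $s=1$ (small $t$), where the martingale $W_n = |Z_n|/m^n$ governs $f_n$ via $f_n(e^{-u/m^n}) \to \mathbb{E}[e^{-uW}]$, and a region near $s=0$ (large $t$), where Schr\"oder's functional equation $Q \circ f = p_1 Q$ gives $f_n(s) \sim p_1^n Q(s)$. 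Here $Q(s):=\lim_{n\to\infty}p_1^{-n}f_n(s)$ exists and is analytic on a neighborhood of $0$ with $Q(s)\sim s$ as $s\downarrow 0$, exploiting $p_0 = 0$ and $p_1 = f'(0) > 0$. The three regimes of the lemma correspond to which of the two natural scales $m^{-nr}$ and $p_1^n$ dominates; note that $m^{-nr} = p_1^n$ precisely at the threshold $r = -\log p_1/\log m =: \gamma$.

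For $r < \gamma$, the scale $m^{-nr}$ dominates. Rescaling $t = u/m^n$ in the Mellin representation yields
$$m^{nr}\mathbb{E}[|Z_n|^{-r}] = \frac{1}{\Gamma(r)}\int_0^{\infty}u^{r-1}\mathbb{E}[e^{-uW_n}]\,du,$$
and the classical left-tail estimate $\mathbb{P}(W_n \leq x) \lesssim x^{\gamma}$, uniform in $n$ (of Dubuc type in the Schr\"oder range), produces $\sup_n \mathbb{E}[W_n^{-r}] < \infty$ precisely when $r < \gamma$. Dominated convergence then gives $m^{nr}\mathbb{E}[|Z_n|^{-r}] \to \mathbb{E}[W^{-r}] =: C_0 > 0$. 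For $r > \gamma$, the scale $p_1^n$ dominates. Splitting the integral at a fixed $\epsilon > 0$, Schr\"oder's equation handles $[\epsilon,\infty)$ with $p_1^{-n}f_n(e^{-t}) \to Q(e^{-t})$ uniformly, contributing $p_1^n \cdot \Gamma(r)^{-1}\!\int_\epsilon^{\infty}t^{r-1}Q(e^{-t})\,dt$. On $[0,\epsilon]$, the same rescaling shows the integral equals $m^{-nr}\int_0^{\epsilon m^n}u^{r-1}\mathbb{E}[e^{-uW_n}]\,du$, which is $O(m^{-nr} \cdot (m^n)^{r-\gamma}) = O(p_1^n)$ via the tail bound on $W_n$. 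Summing the two pieces yields $\mathbb{E}[|Z_n|^{-r}] \sim C_0 \, p_1^n$.

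The critical case $r = \gamma$ is the main technical obstacle: the two scales coincide at $p_1^n$ and interact to produce the logarithmic enhancement $np_1^n$. Concretely, the rescaled integral $\int_0^{\epsilon m^n}u^{r-1}\mathbb{E}[e^{-uW_n}]\,du$ now diverges logarithmically at $\infty$, since $u^{r-1}\cdot u^{-\gamma} \asymp u^{-1}$ for large $u$, picking up a factor $\asymp \log(m^n) \asymp n$ from integrating over $[1,m^n]$. Combined with the prefactor $m^{-nr} = p_1^n$, this produces the claimed $np_1^n$ scaling. To turn this heuristic into an honest limit, I would use a refined Schr\"oder expansion to sharply match the two regions at the interface $t \asymp m^{-n}$ and identify the constant $C_0$. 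Throughout, $\mathbb{E}[|Z_1|]<\infty$ guarantees integrability near $s=1$ (so the rescaling limit is well-defined), and positivity of $C_0$ follows from $Q > 0$ on $(0,1)$ together with $W > 0$ a.s.\ under $p_0 = 0$.
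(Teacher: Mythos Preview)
The paper does not prove this lemma at all: it simply cites it as \cite[Theorem 1]{Ney harmonic} (Ney and Vidyashankar, 2003), so there is no ``paper's own proof'' to compare against. Your sketch is in fact a reasonable outline of how that result is established, via the Laplace representation $\mathbb{E}[|Z_n|^{-r}]=\Gamma(r)^{-1}\int_0^\infty t^{r-1}f_n(e^{-t})\,dt$ combined with the two competing asymptotics $f_n(e^{-u/m^n})\to\mathbb{E}[e^{-uW}]$ near $t=0$ and $p_1^{-n}f_n(s)\to Q(s)$ away from $s=1$.

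Two points to tighten. First, your argument tacitly assumes $p_1>0$ (you invoke the Schr\"oder function $Q$ throughout), whereas the lemma also covers the B\"ottcher case $p_1=0$, in which the convention $-\log p_1/\log m=+\infty$ places every $r>0$ in the regime $A_n(r)=m^{nr}$; there the left tail of $W_n$ is much thinner (doubly exponential) and $\sup_n\mathbb{E}[W_n^{-r}]<\infty$ for all $r$, so your first argument goes through with an easier domination. Second, in the case $r>\gamma$ your $[0,\epsilon]$ estimate is really $O(\epsilon^{r-\gamma}p_1^n)$, not merely $O(p_1^n)$; you need this $\epsilon$-dependence to recover the full constant $C_0=\Gamma(r)^{-1}\int_0^\infty t^{r-1}Q(e^{-t})\,dt$ by letting $\epsilon\downarrow 0$ after $n\to\infty$ (and you should note that $Q(e^{-t})\asymp t^{-\gamma}$ as $t\downarrow0$, so this integral is finite precisely when $r>\gamma$). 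The critical case $r=\gamma$ you correctly flag as the genuinely delicate step; turning the heuristic $\int_1^{m^n}u^{-1}\,du\asymp n$ into an honest limit does require a more careful matching, and this is where most of the work in the cited reference lies.
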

The next lemma considers the asymptotic behaviour of stretched exponential moments, and we will see that it is related to the LDP of $\bar{Z}_n(\sqrt nA)$ when $|Z_1|$ has Weibull tail. Recall that $\rho$ is the B\"{o}ttcher constant such that $b=m^{\rho}$.
 \begin{lemma}\label{Weibullmoment}Assume $\mathbb{E}[|Z_1|\log |Z_1|]<\infty$. For any $\beta\in(0,1),~l\in(0,\infty)$,\\
 if $p_1>0$, then
$$\lim_{n\rightarrow\infty}\frac{1}{n}{\log \mathbb{E}\left[e^{-l|Z_n|^{\beta}}\right]}=p_1;$$
 if $p_1=0$, then
$$\lim_{n\rightarrow\infty}\frac{1}{n}{\log\left[-\log \mathbb{E}\left[e^{-l|Z_n|^{\beta}}\right]\right]}= \frac{\beta\rho}{\beta+\rho-\beta\rho}\log m.$$
 \end{lemma}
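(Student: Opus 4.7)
Set $\phi_n(l):=\mathbb{E}[e^{-l|Z_n|^\beta}]$. The lemma splits into the Schr\"{o}der regime ($p_1>0$) and the B\"{o}ttcher regime ($p_1=0$, so $b\geq 2$); I would handle them separately.

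\emph{Schr\"{o}der case.} I expect $\phi_n(l)\asymp p_1^n$, which gives the claimed exponential rate (I read the displayed ``$p_1$'' as a typo for $\log p_1$, since $\phi_n(l)\leq 1$ forces a non-positive limit). The lower bound is immediate: on the event that every ancestor has exactly one child, which has probability $p_1^n$, one has $|Z_n|=1$, so $\phi_n(l)\geq e^{-l}p_1^n$. For the upper bound, fix any $r>-\log p_1/\log m$. The elementary inequality $e^{-lx^\beta}\leq C(l,\beta,r)\,x^{-r}$ (valid for all $x>0$ by maximizing $x^re^{-lx^\beta}$ in $x$), combined with $|Z_n|\geq 1$, gives $\phi_n(l)\leq C\,\mathbb{E}[|Z_n|^{-r}]=O(p_1^n)$ by Lemma~\ref{Neyharmonic}. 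Taking $n^{-1}\log$ of the sandwich yields the claim.

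\emph{B\"{o}ttcher case.} The heuristic is $|Z_n|=W_nm^n$, so $\phi_n(l)$ is governed by the left tail of $W_n$. I would first establish a Dubuc-type two-sided estimate, uniform in $n$: there exist $0<c<C<\infty$ and $y_0>0$ such that
$$\exp\!\bigl(-Cy^{-\rho/(1-\rho)}\bigr)\leq\mathbb{P}(W_n\leq y)\leq\exp\!\bigl(-cy^{-\rho/(1-\rho)}\bigr)$$
for every $n$ and every $y\in[b^n/m^n,\,y_0]$. This comes from iterating the finite-$n$ Laplace recursion $\mathbb{E}[e^{-\lambda|Z_{n+1}|}]=g(\mathbb{E}[e^{-\lambda|Z_n|}])$ from $\mathbb{E}[e^{-\lambda|Z_0|}]=e^{-\lambda}$, using $g(s)\sim p_bs^b$ as $s\downarrow 0$ (which shows $-\log\mathbb{E}[e^{-\lambda|Z_n|}]\asymp\lambda^\rho$ uniformly in $n$ in the relevant range, since $b=m^\rho$), and applying de~Bruijn's Tauberian theorem to translate back to the left-tail estimate above.

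Given the tail estimate, the conclusion is a Chebyshev-type optimisation. Set $\gamma:=\beta\rho/(\beta+\rho-\beta\rho)$ and $c_n:=(m^{n\gamma}/l)^{1/\beta}$. From
$$\phi_n(l)\leq e^{-lc^\beta}+\mathbb{P}(|Z_n|\leq c),\qquad \phi_n(l)\geq e^{-lc^\beta}\,\mathbb{P}(|Z_n|\leq c),$$
the choice $c=c_n$ yields $lc_n^\beta=m^{n\gamma}$, while the algebraic identity $(c_n/m^n)^{-\rho/(1-\rho)}=l^{\rho/(\beta(1-\rho))}\,m^{n\gamma}$---simply a rewriting of the definition of $\gamma$---turns the uniform tail estimate into $\mathbb{P}(|Z_n|\leq c_n)=\exp(-\Theta(m^{n\gamma}))$. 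One also checks $c_n>b^n$ for large $n$, equivalently $\gamma>\beta\rho$, which follows from $(1-\beta)(1-\rho)>0$. Both bounds then read $\phi_n(l)=\exp(-\Theta(m^{n\gamma}))$, giving the B\"{o}ttcher limit.

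The main obstacle is the \emph{lower} bound on $\mathbb{P}(W_n\leq y)$ uniformly down to the natural scale $y=b^n/m^n$; pointwise convergence $W_n\to W$ does not suffice here. A robust substitute is a two-stage construction: force the first $k$ generations to be a $b$-ary minimal tree (probability $p_b^{(b^k-1)/(b-1)}$) and then let the $b^k$ subtrees of generations $k+1,\dots,n$ run freely, choosing $k=k_n$ so that $b^{k_n}m^{n-k_n}$ matches the target scale and using Lemma~\ref{Denis} to control the free phase via a second-moment estimate on $W_{n-k_n}$. The upper bound on the left tail is comparatively softer, as it follows by applying Markov to $e^{-\lambda W_n}$ with the iterated lower bound on $-\log\mathbb{E}[e^{-\lambda|Z_n|}]$ obtained in step~1.
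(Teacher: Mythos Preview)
Your argument is correct and follows the same overall strategy as the paper---reduce $\mathbb{E}[e^{-l|Z_n|^\beta}]$ to lower-deviation estimates for $|Z_n|$ and optimise---but the execution differs in two places worth recording.

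In the Schr\"{o}der case the paper invokes a local bound of Fleischmann--Wachtel, $\mathbb{P}(|Z_n|=k)\leq cp_1^n k^{-1-\log p_1/\log m}$, and sums over~$k$. Your route via $e^{-lx^\beta}\leq C_r x^{-r}$ and Lemma~\ref{Neyharmonic} is shorter and uses only ingredients already stated in the paper; it is a genuine simplification. (Your reading of the displayed $p_1$ as a typo for $\log p_1$ is correct.)

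In the B\"{o}ttcher case the paper cites the \emph{local} estimate from \cite{2007Flshman},
\[
m^n\,\mathbb{P}(|Z_n|=k)\ \asymp\ \exp\!\bigl(-c(k/m^n)^{-\rho/(1-\rho)}\bigr),\qquad b^n\leq k\leq m^{(1-\varepsilon)n},
\]
and then sums $\sum_k e^{-lk^\beta}\mathbb{P}(|Z_n|=k)$ with a Laplace-type minimisation of $lx^\beta+c(x/m^n)^{-\rho/(1-\rho)}$; the minimiser is $\asymp m^{n\rho/(\beta+\rho-\beta\rho)}$, exactly your $c_n$. Your Chebyshev split $\phi_n(l)\leq e^{-lc^\beta}+\mathbb{P}(|Z_n|\leq c)$ and $\phi_n(l)\geq e^{-lc^\beta}\mathbb{P}(|Z_n|\leq c)$ is the cumulative version of the same computation. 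The paper's approach has the advantage of quoting a single reference that covers both bounds at once; yours is more self-contained but requires you to actually carry out the Laplace iteration / de~Bruijn Tauberian step (or the minimal-tree construction) to get the uniform-in-$n$ left-tail lower bound, which the paper sidesteps by citation. Your two-stage construction for that lower bound is valid and is in fact close to how \cite{2007Flshman} proceed.
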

 \begin{proof}
 We first consider the case $p_1=0$. Let $d$ be the greatest common divisor of $\{j-k:j\neq k,~p_j p_k>0\}$. According the proof of \cite[Theorem 6]{2007Flshman}, for any fixed $\varepsilon\in(0,1)$, there exist constants $c_3\geq c_4>0$ such that for any $b^n\leq k\leq \lfloor m^{(1-\varepsilon)n}\rfloor$, $k=b^n(\text{mod}~d)$ and $n\geq 1$,
 \begin{equation}\label{asde}
 \exp\left[ -c_3\left(\frac{k}{m^n}\right)^{-\frac{\rho}{1-\rho}}\right]\leq m^n\mathbb{P}(|Z_n|=k)\leq\exp\left[ -c_4 \left(\frac{k}{m^n}\right)^{-\frac{\rho}{1-\rho}}\right].
 \end{equation}
Hence, for the upper bound, we have
\begin{align}\label{stretch}
\mathbb{E}\left[e^{-l|Z_n|^{\beta}}\right]
&\leq \mathbb{E}\left[e^{-l|Z_n|^{\beta}}1_{\{|Z_n|\leq m^{(1-\varepsilon)n}\}}\right]+e^{-lm^{\beta(1-\varepsilon)n}}\cr
&\leq m^{-n}\sum_{b^n\leq k\leq \lfloor m^{(1-\varepsilon)n}\rfloor \atop k=b^n(\text{mod}~d)}\exp(-lk^{\beta}) \exp\left[ -c_4 \left(\frac{k}{m^n}\right)^{-\frac{\rho}{1-\rho}}\right]+e^{-lm^{\beta(1-\varepsilon)n}}.
\end{align}
Note that there exists a positive constant $T$ depending only on $l,~c_4,~\rho,~\beta$ such that for $n$ large enough,
$$\min\limits_{b^n\leq x\leq\lfloor m^{(1-\varepsilon)n}\rfloor}\left\{lx^{\beta}+c_4 \left(\frac{x}{m^n}\right)^{-\frac{\rho}{1-\rho}}\right\}\geq Tm^\frac{\beta\rho n}{\beta+\rho-\beta\rho}.$$
As a consequence, if $1-\varepsilon>\frac{\rho }{\beta+\rho-\beta\rho}$ , then
 \begin{align}
 \mathbb{E}\left[e^{-l|Z_n|^{\beta}}\right]&
 \leq m^{\varepsilon n}\exp\left(-Tm^\frac{\beta\rho n}{\beta+\rho-\beta\rho}\right)+e^{-lm^{\beta(1-\varepsilon)n}}\cr
 &\leq2m^{\varepsilon n}\exp\left(-(T\wedge l)m^\frac{\beta\rho n}{\beta+\rho-\beta\rho}\right),\nonumber
 \end{align}
 which yields,
 $$\liminf_{n\rightarrow\infty}\frac{1}{n}\log\left[-\log \mathbb{E}\left[e^{-l|Z_n|^{\beta}}\right]\right]\geq \frac{\beta\rho}{\beta+\rho-\beta\rho}\log m.$$
 For the lower bound, similarly, using the left hand side of (\ref{asde}), we have for $n$ large enough,
 \begin{align}
 \mathbb{E}\left[e^{-l|Z_n|^{\beta}}\right]&\geq m^{-n}\sum_{b^n\leq k\leq \lfloor m^{(1-\varepsilon)n}\rfloor \atop k=b^n(\text{mod}~d)}\exp(-lk^{\beta}) \exp\left[ -c_3 \left(\frac{k}{m^n}\right)^{-\frac{\rho}{1-\rho}}\right]\cr
 &\geq m^{-n}\exp\left(-T'm^\frac{\beta\rho n}{\beta+\rho-\beta\rho}\right),\nonumber
 \end{align}
 where $T'>0$ depending on $l,c_3,\rho,\beta,d$. Taking limits yields that
 $$\limsup_{n\rightarrow\infty}\frac{1}{n}\log\left[-\log \mathbb{E}\left[e^{-l|Z_n|^{\beta}}\right]\right]\leq \frac{\beta\rho}{\beta+\rho-\beta\rho}\log m.$$
 For $p_1>0$, by \cite[Lemma 13]{2008Flshman}, there exists a universal constant $c>0$ such that for any $k,n\geq1$,
 $$\mathbb{P}(|Z_n|=k)\leq cp^n_1k^{-(\log p_1/\log m)-1}.$$
 Thus,
 $$p^n_1e^{-l}\leq\mathbb{E}\left[e^{-l|Z_n|^{\beta}}\right]\leq cp^n_1\sum_{k\geq1} k^{-(\log p_1/\log m)-1}e^{-lk^{\beta}},$$
 which implies the desired result.
 \end{proof}
The following two lemmas are analogous results of \cite[Lemma 2.4]{Louidor}. In the sequel, denote by $\mathcal{M}$ the collection of all local finite point measures on $\mathbb{R}$. Recall that for $\xi\in\mathcal{M}$,
 $x\in\xi$ means $x$ is an atom of $\xi$, and $S_x$ is the position of $x$ (i.e., $\xi=\sum_{x\in\xi}\delta_{S_x}$). $|\xi|$ stands for the total number of its atoms. Let $Z^{\xi}_n$ be the branching random walk started from $Z^{\xi}_0=\xi$. And for simplicity, we write $Z^{x}_n:=Z^{\delta_{S_x}}_n$. Denote by $\bar{Z}^{\xi}_n(\cdot)$ the corresponding empirical distribution of $Z^{\xi}_n$. Let $W^x_n:=|Z^{x}_n|/m^n$.
\begin{lemma} \label{louidorlem}
Assume $\mathbb{E}[|Z_1|^{\beta}]<\infty$ or $|Z_1|\sim$Pareto$(\beta)$ for some $\beta>1$. Then for every $\epsilon\in(0,\frac{1}{3})$, there exists a constant $C_{1}>0$ depending on $\epsilon$, $\beta$  such that for any $\xi\in\mathcal{M}$, $n\geq 1$ and $A\subset\mathbb{R}$,
\begin{equation}\label{louidorshi}
\mathbb{P}\Big(\bar{Z}^{\xi}_n(A)\geq \frac{1}{|\xi|}\sum_{x\in\xi}\nu_n\left(A-S_x\right)+\epsilon\Big)\leq C_{1}|\xi|^{1-\beta}.
\end{equation}
The same holds if $>$, $+\epsilon$ are replaced by $<$, $-\epsilon$.
\end{lemma}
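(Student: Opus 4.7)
The event $\{\bar{Z}^{\xi}_n(A)\geq\bar{\nu}_\xi+\epsilon\}$, with $\bar{\nu}_\xi:=\frac{1}{|\xi|}\sum_{x\in\xi}\nu_n(A-S_x)$, rewrites as $\{\sum_{x\in\xi}Y^x\geq 0\}$ where $Y^x:=Z^x_n(A)-(\bar{\nu}_\xi+\epsilon)|Z^x_n|$. Conditionally on $\xi$ the $Y^x$ are independent with $|Y^x|\leq(2+\epsilon)|Z^x_n|$ and $\sum_x\mathbb{E}Y^x=-m^n|\xi|\epsilon$, so the task reduces to a one-sided large-deviation bound for a sum of $|\xi|$ independent centered variables whose moments are controlled via Lemma \ref{Denis}. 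The inequality is trivial whenever $|\xi|\leq N_0(\epsilon,\beta)$ (absorb into $C_1$), so I assume $|\xi|$ is large.

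\textbf{Truncation.} Fix a constant $K>1$ to be chosen depending only on $\epsilon,\beta$, and truncate at the scale $M:=K|\xi|$ by setting $\tilde{Y}^x:=Y^x\mathbf{1}_{\{|Z^x_n|\leq Mm^n\}}$. The big-jump event $B:=\{\exists\,x\in\xi:|Z^x_n|>Mm^n\}$ satisfies, by the uniform tail bound of Lemma \ref{Denis} (used directly in the Pareto case and via Markov's inequality in the $\mathbb{E}|Z_1|^\beta<\infty$ case),
$$\mathbb{P}(B)\leq|\xi|\,\mathbb{P}(W_n>M)\leq c\,K^{-\beta}|\xi|^{1-\beta}.$$
On $B^c$ one has $\sum Y^x=\sum\tilde{Y}^x$, so $\mathbb{P}(\sum Y^x\geq 0)\leq\mathbb{P}(B)+\mathbb{P}(\sum\tilde{Y}^x\geq 0)$. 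Using $|\mathbb{E}[Y^x\mathbf{1}_{|Z^x_n|>Mm^n}]|\leq C\,m^n M^{1-\beta}$ (the truncated first moment is handled by integrating the tail), taking $K$ large enough gives $\sum_x\mathbb{E}\tilde{Y}^x\leq-\tfrac{1}{2}m^n|\xi|\epsilon$. Setting $U^x:=\tilde{Y}^x-\mathbb{E}\tilde{Y}^x$ then reduces matters to bounding $\mathbb{P}(\sum U^x\geq\tfrac{1}{2}m^n|\xi|\epsilon)$.

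\textbf{Markov and Rosenthal at moment $p=2\beta$.} By Markov's inequality followed by Rosenthal's inequality for independent centered variables,
$$\mathbb{P}\bigl(\,\sum U^x\geq\tfrac{1}{2}m^n|\xi|\epsilon\bigr)\leq\bigl(\tfrac{1}{2}m^n|\xi|\epsilon\bigr)^{-2\beta}C_\beta\Bigl(\sum_x\mathbb{E}|U^x|^{2\beta}+\bigl(\sum_x\mathrm{Var}(U^x)\bigr)^{\beta}\Bigr).$$
The key truncated-moment estimate $\mathbb{E}[W_n^p\mathbf{1}_{W_n\leq M}]\leq C_{p,\beta}M^{p-\beta}$, valid for all $p>\beta$ by integration of the Pareto tail (uniform in $n$ through Lemma \ref{Denis}), combined with $|U^x|\leq 2(2+\epsilon)Mm^n$, yields $\sum_x\mathbb{E}|U^x|^{2\beta}\leq C_{\epsilon,\beta}K^\beta m^{2n\beta}|\xi|^{\beta+1}$, whose Markov contribution is exactly $C_{\epsilon,\beta}K^\beta|\xi|^{1-\beta}$. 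A short case analysis on $\beta$ (uniformly bounded variance when $\beta>2$; bound of order $M^{2-\beta}$ or $\log M$ otherwise) shows that $(\sum_x\mathrm{Var}(U^x))^\beta$ contributes at most an analogous $C|\xi|^{1-\beta}$. Combining with $\mathbb{P}(B)$ gives $\mathbb{P}(\sum Y^x\geq 0)\leq C_1|\xi|^{1-\beta}$.

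\textbf{Symmetric case and main obstacle.} The lower-deviation counterpart (with $<,\,-\epsilon$) follows by repeating the argument with $\hat{Y}^x:=Z^x_n(A)-(\bar{\nu}_\xi-\epsilon)|Z^x_n|$ and the event $\{\sum\hat{Y}^x\leq 0\}$; the truncation, recentering and Rosenthal step proceed verbatim. The delicate point is the strict Pareto$(\beta)$ regime, where $\mathbb{E}W_n^\beta=\infty$: a naive Rosenthal at the $\beta$-th moment would pick up a parasitic logarithmic factor via $\mathbb{E}[W_n^\beta\mathbf{1}_{W_n\leq M}]\asymp\log M$. Passing to $p=2\beta$, at which the truncated moment grows polynomially like $M^\beta$, exactly balances against the one-big-jump probability $|\xi|M^{-\beta}$ at the canonical truncation level $M=K|\xi|$, producing the sharp rate $|\xi|^{1-\beta}$ uniformly over both regimes $\beta>2$ and $1<\beta\leq 2$.
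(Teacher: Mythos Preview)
Your argument is correct and reaches the same $|\xi|^{1-\beta}$ bound, but the route differs from the paper's in two respects. First, the paper does not work with the single combined variable $Y^x=Z^x_n(A)-(\bar\nu_\xi+\epsilon)|Z^x_n|$; instead it splits the event into a denominator deviation $I_1=\mathbb{P}\big(\sum_x(1-W^x_n)>\tfrac{\epsilon}{3}|\xi|\big)$ and a numerator deviation $I_2=\mathbb{P}\big(\sum_x(W^x_n(A)-\nu_n(A-S_x))>\tfrac{\epsilon}{3}|\xi|\big)$ via the elementary inequality $\tfrac{a}{b}\geq p+\epsilon\Rightarrow b<(1-\tfrac{\epsilon}{2})\mathbb{E}b$ or $a>\mathbb{E}a+\tfrac{\epsilon}{3}\mathbb{E}b$. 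Second, and more substantively, the paper bounds each sum by Nagaev's Fuk--Nagaev inequality \eqref{Nagaevinequailty} at a \emph{sub}-$\beta$ moment $t=1+\tfrac{\beta-1}{2\beta}\in(1,2\wedge\beta)$, where $\sup_n\mathbb{E}W_n^t<\infty$ holds without truncation; the big-jump term and the exponential term are then read off directly with $y=\tfrac{z}{4\beta}$, $z=\tfrac{\epsilon}{3}|\xi|$. Your approach instead truncates explicitly at $M=K|\xi|$ and then applies Rosenthal's inequality at the \emph{super}-$\beta$ moment $2\beta$, which is what forces the case split on the variance according to $\beta\lessgtr 2$. Both methods are essentially ``one big jump plus moment bound''; the paper's use of Nagaev packages the truncation and avoids the case analysis, while your version is arguably more self-contained (Rosenthal is more standard than the specific Nagaev corollary) and handles numerator and denominator in one stroke.
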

\begin{proof}We first consider the case $\mathbb{E}[|Z_1|^{\beta}]<\infty$ for some $\beta>1$. By the branching property, for any $n\geq 1$ and $\epsilon\in(0,\frac{1}{3})$, we have
\begin{align}\label{louidor}
&\mathbb{P}\left(\bar{Z}^{\xi}_n(A)\geq \frac{1}{|\xi|}\sum_{x\in\xi}\nu_n\left(A-S_x\right)+\epsilon\right)\cr
&=\mathbb{P}\left(\frac{\frac{1}{|\xi|}\sum_{x\in\xi}{Z}^{x}_n(A)/m^n}{\frac{1}{|\xi|}\sum_{x\in\xi}{Z}^{x}_n/m^n}\geq\frac{1}{|\xi|}\sum_{x\in\xi}\nu_n(A-S_x)+\epsilon\right)\cr
&\leq\mathbb{P}\left(\sum_{x\in\xi}{W}^{x}_n<|\xi|(1-\epsilon/2)\right)+\mathbb{P}\left(\sum_{x\in\xi}\left({W}^{x}_n(A)-\nu_n\left(A-S_x\right)\right)>|\xi|\epsilon/3\right)\cr
&\leq\mathbb{P}\left(\sum_{x\in\xi}(1-W^x_n)>\frac{\epsilon}{3}{|\xi|}\right)+\mathbb{P}\left(\sum_{x\in\xi}\left({W}^{x}_n(A)-\nu_n\left(A-S_x\right)\right)>|\xi|\epsilon/3\right)\cr
&=:I_1+I_2,
\end{align}
where $\epsilon\in(0,\frac{1}{3})$ is used in the first inequality. We first treat $I_2$. By \cite[Corollary 1.6]{Nagaev}, we know: if $Y_x,~x\in\xi$ are independent random variables with zero mean, and $A^+_t=\sum_{x\in\xi}\mathbb{E}[Y^t_x 1_{\{Y_x\geq 0\}}]<\infty$ for some $1\leq t\leq 2$, then for $y^t\geq 4A^+_t$ and $z>y$,
\begin{equation}\label{Nagaevinequailty}
\mathbb{P}\left(\sum\limits_{x\in\xi} Y_x\geq z\right)\leq\sum\limits_{x\in\xi}\mathbb{P}(Y_x>y)+\left(e^2A^+_t/zy^{t-1}\right)^{z/2y}.
\end{equation}
So, if we choose $t=1+\frac{\beta-1}{2\beta}$, $y=\frac{1}{4\beta}z$ and $z=|\xi|\epsilon/3$, then $t\in(1,2\wedge\beta]$. Let $Y_x={W}^{x}_n(A)-\nu_n(A-x)$. Note that by (\ref{moments}), there exists a constant $C_{\beta}>0$ such that
\begin{equation}\label{paretomoment}
\sup_{n}\mathbb{E}[W^{t}_n]<C_{\beta}<\infty.
\end{equation}
Therefore, $A^+_t<\infty$. Furthermore, by (\ref{paretomoment}), there exists a constant $C_{\epsilon,\beta}>0$ such that for all $|\xi|>C_{\epsilon,\beta}$ and $n\geq1$,
$$y^t=\left(\frac{\epsilon|\xi|}{12\beta}\right)^t\geq 4|\xi|C_{\beta}\geq 4|\xi|\mathbb{E}[W^t_n]\geq 4\sum\limits_{x\in\xi}\mathbb{E}[Y^t_x 1_{\{Y_x\geq 0\}}].$$
So, applying (\ref{Nagaevinequailty}), we obtain that for all $|\xi|>C_{\epsilon,\beta}$, $n\geq 1$ and $A\subset\mathbb{R}$,
\begin{align}\label{dfdf}
I_2&\leq|\xi|\mathbb{P}\Big({W}^{x}_n(A)-\nu_n(A-S_x)>\frac{\epsilon|\xi|}{12\beta}\Big)+\left(\frac{e^2C_{\beta}3^{t}}{(4\beta)^{1-t}\epsilon^t}\right)^{2\beta}|\xi|^{(1-t)2\beta}\cr
 &\leq|\xi|\mathbb{P}\Big({W}_n>\frac{\epsilon|\xi|}{12\beta}\Big)+\left(\frac{e^2C_{\beta}3^{t}}{(4\beta)^{1-t}\epsilon^t}\right)^{2\beta}|\xi|^{1-\beta}.
\end{align}
By (\ref{moments}) and Markov inequality, there exists a constant $C'_{\epsilon,\beta}>0$ such that for all $n\geq1$,
\begin{equation}\label{dfdq}
 \mathbb{P}\Big({W}_n>\frac{\epsilon|\xi|}{12\beta}\Big)\leq \left(\frac{12\beta}{\epsilon|\xi|}\right)^{\beta}\mathbb{E}\left[\sup_nW^{\beta}_n\right]\leq C'_{\epsilon,\beta}|\xi|^{-\beta}.
 \end{equation}
Plugging (\ref{dfdq}) into (\ref{dfdf}), yields that for all $|\xi|>C_{\epsilon,\beta}$, $n\geq 1$ and $A\subset\mathbb{R}$,
$$I_2\leq C'_{\epsilon,\beta}|\xi|^{1-\beta}+\left(\frac{e^2C_{\beta}3^{t}}{(4\beta)^{1-t}\epsilon^t}\right)^{2\beta}|\xi|^{1-\beta}.$$
For $I_1$, using (\ref{Nagaevinequailty}) again and let $Y_x=1-W^x_n$, then for all $|\xi|>\frac{12\beta}{\epsilon}$ and $n\geq1$,
$$I_1\leq|\xi|\mathbb{P}\Big(1-W^x_n>\frac{\epsilon|\xi|}{12\beta}\Big)+\left(\frac{e^23^{t}}{(4\beta)^{1-t}\epsilon^t}\right)^{2\beta}|\xi|^{(1-t)2\beta}= \left(\frac{e^23^{t}}{(4\beta)^{1-t}\epsilon^t}\right)^{2\beta}|\xi|^{1-\beta}.$$
Plugging above two inequalities into (\ref{louidor}), yields that there exists a constant $T(\epsilon,\beta)>0$ such that for $|\xi|>C_{\epsilon,\beta}\vee\frac{12\beta}{\epsilon}$,
$$
\mathbb{P}\Big(\bar{Z}^{\xi}_n(A)\geq \frac{1}{|\xi|}\sum_{x\in\xi}\nu_n\left(A-S_x\right)+\epsilon\Big)\leq T(\epsilon,\beta)|\xi|^{1-\beta}.
$$
 Hence to obtain (\ref{louidorshi}), we can take $C_1:=\left(C_{\epsilon,\beta}\vee\frac{12\beta}{\epsilon}\right)^{\beta-1}\vee T(\epsilon,\beta)$.
 For the case $\mathbb{P}(|Z_1|>x)\sim \Theta(1)x^{-\beta}$, we only need to replace (\ref{dfdq}) by the following, which is a consequence of Lemma \ref{Denis}.
$$
\mathbb{P}\left({W}_n>\frac{\epsilon|\xi|}{12\beta}\right)\leq c_2\left(\frac{\epsilon}{12\beta}\right)^{-\beta}|\xi|^{-\beta}.
$$
  Replacing $A$ with $A^c$, we obtain (\ref{louidorshi}) with $>$, $+\epsilon$ are replaced by $<$, $-\epsilon$.
\end{proof}
\begin{remark}\label{pointlouidor} One can check that: if $\xi$ is a point process with determinate number of atoms, the result is similar, only that $\mathbb{E}[\nu_n(A-S_x)]$ replaces $\nu_n(A-S_x)$.
\end{remark}

\begin{lemma}\label{Weibulllemma}If $|Z_1|\sim$Weibull$(\beta)$ for some $\beta\in(0,1)$, then for every $\epsilon>0$ small enough
there exist positive constants $C_2$, $C_3$ depending on $\epsilon$, $\beta$ such that for any $\xi\in\mathcal{M}$, $n\geq 1$ and any $A\subset\mathbb{R}$,
\begin{equation}
\mathbb{P}\Big(\bar{Z}^{\xi}_n(A)\geq \frac{1}{|\xi|}\sum_{x\in\xi}\nu_n(A-S_x)+\epsilon\Big)\leq C_2e^{-C_3|\xi|^{\beta}}.
\end{equation}
The same holds if $>$, $+\epsilon$ are replaced by $<$, $-\epsilon$.
\end{lemma}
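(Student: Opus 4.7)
The plan is to mirror the proof of Lemma \ref{louidorlem}. We decompose
$$\mathbb{P}\Bigl(\bar{Z}^{\xi}_n(A)\geq \tfrac{1}{|\xi|}\textstyle\sum_{x\in\xi}\nu_n(A-S_x)+\epsilon\Bigr)\leq I_1+I_2,$$
with $I_1:=\mathbb{P}\bigl(\sum_{x\in\xi}(1-W^x_n)>\tfrac{\epsilon}{3}|\xi|\bigr)$ and $I_2:=\mathbb{P}\bigl(\sum_{x\in\xi}(W^x_n(A)-\nu_n(A-S_x))>\tfrac{\epsilon}{3}|\xi|\bigr)$, exactly as in (\ref{louidor}). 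For $I_1$, the i.i.d.\ summands $1-W^x_n$ are centered, bounded above by $1$, and have uniformly bounded variance since Lemma \ref{Denis} yields $\sup_n\mathbb{E}[W_n^2]<\infty$ in the Weibull regime; Bernstein's inequality therefore gives $I_1\leq \exp(-c_1|\xi|)$, which is much stronger than $e^{-C_3|\xi|^\beta}$ because $\beta<1$. The version of the inequality with $<$ and $-\epsilon$ follows by replacing $A$ with $A^c$, as at the end of the proof of Lemma \ref{louidorlem}.

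For $I_2$, set $Y_x:=W^x_n(A)-\nu_n(A-S_x)$. These are i.i.d., centered, satisfy $Y_x\geq -1$ and $Y_x\leq W^x_n$, so by Lemma \ref{Denis}, $\mathbb{P}(Y_x>s)\leq c_{\epsilon'}\exp(-l(m-\epsilon')^\beta s^\beta)$ uniformly in $n$ for all $s>0$. I apply the Nagaev inequality \reff{Nagaevinequailty} (or a higher-moment Fuk--Nagaev variant, which is available because all power moments of $W_n$ are uniformly bounded by Lemma \ref{Denis}) with truncation level $y=y_0|\xi|$ for a small constant $y_0=y_0(\epsilon,\beta)>0$: the one-big-jump contribution
$$|\xi|\,\mathbb{P}(Y_x>y_0|\xi|)\leq|\xi|\,c_{\epsilon'}\exp\bigl(-l(m-\epsilon')^\beta y_0^\beta|\xi|^\beta\bigr)\leq C_2\,e^{-C_3|\xi|^\beta}$$
already matches the target rate. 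The remaining ``truncated-bulk'' contribution is absorbed by combining a Bennett-type bound on the bounded centered truncated variables with the uniform stretched-exponential moment $\sup_n\mathbb{E}[e^{\theta W_n^\alpha}]<\infty$ for $\alpha<\beta$ from Lemma \ref{Denis}, letting $\alpha\uparrow\beta$ to extract the rate $|\xi|^\beta$.

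The principal obstacle is securing the sharp $|\xi|^\beta$ exponent for $I_2$. A naive application of Fuk--Nagaev with only second moments and truncation optimized between the big-jump and Bernstein scales yields only $e^{-c|\xi|^{\beta/(1+\beta)}}$, which falls short of the claimed rate. The exponent $|\xi|^\beta$ is precisely the ``one big jump'' rate for sums of subexponential (in particular Weibull$(\beta)$ with $\beta<1$) random variables; obtaining it non-asymptotically requires either invoking a dedicated sub-Weibull concentration inequality (exploiting the uniformly finite higher moments of $W_n$), or peeling contributions of jumps at multiple geometric scales and matching each against the uniform Weibull tail from Lemma \ref{Denis}. Finitely many small $|\xi|$ are absorbed into the constant $C_2$.
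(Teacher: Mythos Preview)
Your decomposition into $I_1$ and $I_2$ matches the paper exactly, and your treatment of $I_1$ via Bernstein is equivalent to the paper's use of \cite[Lemma~2.3]{Louidor} (both exploit $1-W^x_n\le 1$ and $\sup_n\mathbb{E}[W_n^2]<\infty$).

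The gap is in $I_2$. You correctly isolate the one-big-jump contribution $|\xi|\,\mathbb{P}(Y_x>y_0|\xi|)$, which indeed has rate $e^{-C|\xi|^\beta}$. But the remaining ``truncated-bulk'' term is not actually bounded: your sentence ``letting $\alpha\uparrow\beta$ to extract the rate $|\xi|^\beta$'' does not yield a non-asymptotic inequality, because the constant in $\sup_n\mathbb{E}[e^{\theta W_n^\alpha}]$ blows up as $\alpha\to\beta$, so no fixed pair $C_2,C_3$ survives the limit. You then explicitly call this ``the principal obstacle'' and list two possible remedies (a dedicated sub-Weibull inequality, or multi-scale peeling) without executing either. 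That is a sketch of where the difficulty lies, not a proof.

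The paper closes precisely this gap by invoking Aurzada's large-deviation bound for weighted sums of stretched-exponential random variables \cite[Theorem~2.1]{FrankWebull}. Concretely, with weights
\[
a_i(|\xi|)=\Bigl(|\xi|\epsilon/3+\textstyle\sum_{x\in\xi}\nu_n(A-S_x)\Bigr)^{-1},\qquad 1\le i\le|\xi|,
\]
the event defining $I_2$ becomes $\{\sum_i a_i X_i>1\}$ with $X_i=W^x_n(A)$ having uniform Weibull$(\beta)$ tails by Lemma~\ref{Denis}; a slight modification of the upper bound in \cite[Theorem~2.1]{FrankWebull} then gives $I_2\le e^{-T'|\xi|^\beta}$ for all $|\xi|>T$, uniformly in $n$ and $A$. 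This is exactly the ``dedicated sub-Weibull concentration inequality'' you alluded to; you need to either cite such a result or carry out the multi-scale argument in full.
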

\begin{proof}From (\ref{louidor}), for every $\epsilon\in\left(0,\frac{1}{3}\right)$, we have
\begin{align}\label{loidorweb}
&\mathbb{P}\Big(\bar{Z}^{\xi}_n(A)\geq \frac{1}{|\xi|}\sum_{x\in\xi}\nu_n(A-S_x)+\epsilon\Big)\cr
&\leq\mathbb{P}\left(\sum_{x\in\xi}(1-W^x_n)>\frac{\epsilon}{3}{|\xi|}\right)+\mathbb{P}\left(\sum_{x\in\xi}{W}^{x}_n(A)>\sum_{x\in\xi}\nu_n(A-S_x)+|\xi|\epsilon/3\right)\cr
&=:I_1+I_2.
\end{align}
For $I_1$, note that $\sup\limits_{x,n}\mathbb{E}\left[e^{1-W^x_n}\right]<\infty$. Furthermore, by Lemma \ref{Denis},
$$\mathbb{E}\left[((1-W^x_n)^-)^2\right]\leq 3+\sup_n\mathbb{E}\left[W^2_n\right]<\infty.$$
Hence, by \cite[Lemma 2.3]{Louidor}, there exists a constant $c_5>0$ such that for all $n$, $\xi$ and $\epsilon$ small enough,
\begin{equation}\label{www}
 I_1=\mathbb{P}\left(\sum_{x\in\xi}(1-W^x_n)>\frac{\epsilon}{3}{|\xi|}\right)\leq e^{-c_5\epsilon^2|\xi|}.
 \end{equation}
 For $I_2$, by Lemma \ref{Denis}, there exists a constant $c_6>0$ such that for all $y>0$ and $n\geq1$,
 $$\mathbb{P}({W}^{x}_n(A)>y)\leq\mathbb{P}(W_n>y)\leq c_6e^{-l(m-1)^{\beta}y^\beta}.$$
 Let $X_i$, $i\geq 1$ be i.i.d copies of ${W}^{x}_n(A)$, and
 $$
a_i(|\xi|)=
\begin{cases}
\left(|\xi|\epsilon/3+\sum\limits_{x\in\xi}\nu_n(A-S_x)\right)^{-1},&~1\leq i\leq |\xi|;\\
~~~~~~~~~~~~~~~~0,&~i> |\xi|.
\end{cases}.
 $$
 Then by slight modifications  of the proof of upper bound \cite[Theorem 2.1]{FrankWebull}, there exist positive constants $T$, $T'$ depending on $\epsilon$, $\beta$ such that for any $|\xi|>T$, $n\geq 1$ and any $A\subset\mathbb{R}$,
 \begin{equation}\label{wew}
 I_2=\mathbb{P}\Big(\sum_{x\in\xi}{W}^{x}_n(A)>\sum_{x\in\xi}\nu_n(A-S_x)+|\xi|\epsilon/3\Big)\leq e^{-T'|\xi|^{\beta}}.
 \end{equation}
 Plugging (\ref{www}) and (\ref{wew}) into (\ref{loidorweb}), concludes this lemma with $C_2:=e^{T'T^{\beta}}$  and $C_3:=T'\wedge(c_5\epsilon^2)$.
\end{proof}
The following lemma concerns large deviation probabilities of sums of i.i.d Weibull tail random variables.
\begin{lemma}\label{WeibullLDP} Suppose $\{X_i\}_{i\geq 1}$ is a sequence of i.i.d. random variables and $\mathbb{P}(X>x)=\Theta(1)e^{-\lambda x^{\alpha}}$ as $x\rightarrow\infty$ with some $\alpha\in(1,\infty)$ and $\lambda>0$. Assume $t_n=o(n^{1/3})$ and $t_n\rightarrow\infty$. For any $0<a<b\leq+\infty$, we have
$$\lim\limits_{n\rightarrow\infty}\frac{{t_n}^{\alpha-1}}{n^{\alpha/2}}
\log\mathbb{P}\left(\sum^{t_n}_{i=1}X_i\in(a\sqrt n,b\sqrt n)\right)= -\lambda a^{\alpha}.$$
\end{lemma}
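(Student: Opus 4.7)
This is a moderate/large deviation for a sum of $t_n$ Weibull-tailed variables at the level $a\sqrt n$, in the regime where the per-summand target $a\sqrt n/t_n$ diverges (because $t_n=o(n^{1/3})\ll n^{1/2}$). Both bounds will realize the same ``spread-out jump'' strategy: every $X_i$ sits near $a\sqrt n/t_n$, costing $\lambda(a\sqrt n/t_n)^{\alpha}$ each, for a total $\lambda a^{\alpha}n^{\alpha/2}/t_n^{\alpha-1}$.

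\textbf{Upper bound.} Since $\alpha>1$, the cumulant $\Lambda(\theta):=\log\mathbb{E}[e^{\theta X}]$ is finite on all of $\mathbb{R}$, so Chernoff yields, for $\Lambda^{*}(c):=\sup_{\theta}\bigl(c\theta-\Lambda(\theta)\bigr)$,
\[
\mathbb{P}\Bigl(\sum_{i=1}^{t_n}X_i\geq a\sqrt n\Bigr)\leq \exp\bigl(-t_n\Lambda^{*}(a\sqrt n/t_n)\bigr).
\]
I claim $\Lambda^{*}(c)=\lambda c^{\alpha}(1+o(1))$ as $c\to\infty$. Starting from $\mathbb{E}[e^{\theta X}\mathbf{1}_{X>0}]=\mathbb{P}(X>0)+\theta\int_0^{\infty}e^{\theta x}\mathbb{P}(X>x)\,dx$ and plugging in $\mathbb{P}(X>x)=\Theta(1)e^{-\lambda x^{\alpha}}$, Laplace's method around the saddle $x_\theta=(\theta/(\lambda\alpha))^{1/(\alpha-1)}$ delivers $\Lambda(\theta)=(\alpha-1)(\lambda\alpha^{\alpha})^{-1/(\alpha-1)}\theta^{\alpha/(\alpha-1)}(1+o(1))$ as $\theta\to\infty$; the Legendre transform (optimizer $\theta_c=\lambda\alpha c^{\alpha-1}$) then gives the claim. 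Substituting $c=a\sqrt n/t_n\to\infty$ and using $\{\sum X_i\in(a\sqrt n,b\sqrt n)\}\subset\{\sum X_i\geq a\sqrt n\}$ produces
\[
\limsup_{n\to\infty}\frac{t_n^{\alpha-1}}{n^{\alpha/2}}\log\mathbb{P}\Bigl(\sum_{i=1}^{t_n}X_i\in(a\sqrt n,b\sqrt n)\Bigr)\leq -\lambda a^{\alpha}.
\]

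\textbf{Lower bound.} Set $y_n:=a\sqrt n/t_n$ and $\eta_n:=M(\alpha\lambda y_n^{\alpha-1})^{-1}$ with $M>0$ to be chosen. Since $t_n=o(n^{1/3})$, $t_n\eta_n=Mt_n^{\alpha}/(\alpha\lambda a^{\alpha-1}n^{(\alpha-1)/2})=o(\sqrt n)$, so for $n$ large the window $(a\sqrt n,a\sqrt n+t_n\eta_n)$ lies inside $(a\sqrt n,b\sqrt n)$ (automatic for $b=\infty$). By independence,
\[
\mathbb{P}\Bigl(\sum_{i=1}^{t_n}X_i\in(a\sqrt n,b\sqrt n)\Bigr)\geq \mathbb{P}\bigl(X_1\in(y_n,y_n+\eta_n)\bigr)^{t_n}.
\]
Expanding $(y_n+\eta_n)^{\alpha}=y_n^{\alpha}+M/\lambda+o(1)$ and using the Weibull asymptotic with constants $c_1\leq c_2$,
\[
\mathbb{P}\bigl(X_1\in(y_n,y_n+\eta_n)\bigr)=\mathbb{P}(X_1>y_n)-\mathbb{P}(X_1>y_n+\eta_n)\geq e^{-\lambda y_n^{\alpha}}\bigl(c_1-c_2e^{-M+o(1)}\bigr);
\]
fixing $M$ large enough that $c_1-c_2e^{-M}>0$ gives $\mathbb{P}(X_1\in\cdot)\geq c\,e^{-\lambda y_n^{\alpha}}$. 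Taking $t_n$-th power, logarithm, and multiplying by $t_n^{\alpha-1}/n^{\alpha/2}$, the correction $t_n^{\alpha}\log c/n^{\alpha/2}$ vanishes because $t_n=o(n^{1/2})$, so $\liminf\geq-\lambda a^{\alpha}$.

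\textbf{Main obstacle.} The technical crux is the sharp $\theta\to\infty$ asymptotic of $\Lambda(\theta)$, i.e.\ the Laplace/saddle computation $\Lambda(\theta)\sim(\alpha-1)(\lambda\alpha^{\alpha})^{-1/(\alpha-1)}\theta^{\alpha/(\alpha-1)}$. Because the hypothesis only prescribes the tail of $X$ up to multiplicative constants and leaves the bulk unspecified, one must check that the unknown constants and the behaviour on $\{X\leq 0\}$ contribute only sub-exponential prefactors to $\mathbb{E}[e^{\theta X}]$, so that after taking logarithms and the Legendre transform they are absorbed into the $(1+o(1))$ factor in $\Lambda^{*}(c)=\lambda c^{\alpha}(1+o(1))$.
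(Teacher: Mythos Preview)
Your proof is correct. For the lower bound, both you and the paper force every summand into a high window and take the $t_n$-th power; the only difference is that the paper uses the simpler window $(a\sqrt n/t_n,\,b\sqrt n/t_n)$ directly, so that
\[
\mathbb{P}\Bigl(X_1\in\Bigl(\tfrac{a\sqrt n}{t_n},\tfrac{b\sqrt n}{t_n}\Bigr)\Bigr)\geq Ce^{-\lambda(a\sqrt n/t_n)^{\alpha}}-C'e^{-\lambda(b\sqrt n/t_n)^{\alpha}}\geq \tfrac{C}{2}\,e^{-\lambda(a\sqrt n/t_n)^{\alpha}}
\]
for large $n$, avoiding your Taylor-expansion step; your shrinking window $(y_n,y_n+\eta_n)$ is correct but more elaborate than necessary. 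For the upper bound the paper does not argue at all---it simply cites \cite[Lemma~B.1]{ChenHe}---whereas you give a self-contained Chernoff/Laplace argument yielding $\Lambda^{*}(c)\sim\lambda c^{\alpha}$. Your route has the merit of being explicit and of showing that only $t_n=o(\sqrt n)$ (rather than $o(n^{1/3})$) is really needed; the cost is the saddle-point computation, which you rightly flag as the place where the $\Theta(1)$ tail hypothesis and the unspecified left tail must be handled carefully (they contribute at most $O(\log\theta)$ to $\Lambda(\theta)$, hence are harmless).
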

\begin{proof}The upper bound can be found in \cite[Lemma B.1]{ChenHe}. For the lower bound, since $Ce^{-\lambda x^{\alpha}}\leq\mathbb{P}(X>x)\leq C'e^{-\lambda x^{\alpha}}$ for some constants $C'>C>0$, we have for $n$ large enough,
\begin{align}
\mathbb{P}\left(\sum^{t_n}_{i=1}X_i\in(a\sqrt n,b\sqrt n)\right)&\geq\mathbb{P}\left(X_i\in\left(\frac{a\sqrt n}{t_n},\frac{b\sqrt n}{t_n}, \right), \forall 1\leq i\leq t_n\right)\cr
&\geq \left(Ce^{-\lambda(\frac{a\sqrt n}{t_n})^{\alpha}}-C'e^{-\lambda(\frac{b\sqrt n}{t_n})^{\alpha}}\right)^{t_n}\cr
&\geq C^{t_n}e^{-\lambda a^{\alpha}\frac{n^{\alpha/2}}{t^{\alpha-1}_n}}.\nonumber
\end{align}
Then, the desired lower bound follows, provided $t_n=o(\sqrt n)$ .
\end{proof}
\section{Proof of Theorem \ref{main1}.}\label{offspringpareto}
In this section, we assume $|Z_1|\sim$Pareto$(\beta)$ for some $\beta\in(1,+\infty)$, and $\mathbb{E}[e^{\theta X}]<\infty$ for some $\theta>0$. We also assume that $I_A(p)<\infty$ and $I_A(\cdot)$ is continuous at $p$.
\begin{proof}
\textbf{Lower bound}. Fix $\epsilon>0$. By the continuity of $I_A(\cdot)$ at $p$, there exist some $\delta>0$ and $|x|<I_A(p)+\epsilon$ such that for any small $\eta>0$,
    $$\inf_{y\in[x-\eta,x+\eta]}\nu(A-y)\geq p+\delta.$$
 Consequently, we can choose $M$ large enough such that
 $$\frac{1}{1+M^{-1}}\inf_{y\in[x-\eta,x+\eta]}\nu(A-y)\geq p+\frac{\delta}{2}.$$
 Let $t_n:=\lfloor h\sqrt n\rfloor$ with some $h>0$ and $|v|$ be the generation of particle $v$. Set
 \begin{align}
  Z^v_{k}&:=\sum_ {u\in Z_{|v|+k},~u~\text{is a descent of}~v}\delta_{S_u-S_v};\cr
  H(u)&:=\left\{w\in Z_{t_n+1}:~w\notin Z^u_{1}\right\},~\text{for}~u\in Z_{t_n};\cr
\mathcal{E}&:=\left\{(\xi,k,r)\in \mathcal{M}\times\mathbb{N^+}\times\mathbb{N^+}: \xi~\text{has exactly one atom}~z~\text{s.t.}~S_z\in[(x-\eta)\sqrt n,(x+\eta)\sqrt n];~k>2Mr\right\};\cr
E&:=\left\{Z_{t_n} \text{has exactly one particle}~u~\text{s.t.}~S_u~\text{in}~[(x-\eta)\sqrt n,(x+\eta)\sqrt n]~\text{and}~|Z^u_1|>2M\sum_{v\neq u,v\in Z_{t_n}}|Z^v_1|\right\}.\nonumber
 \end{align}
 Namely, $Z^v_{k}$ is the $k$th generation of the sub-BRW emanating form particle $v$. And $H(u)$ stands for a collection of particles at time $t_n+1$, who are not the children of $u$. The proof of the lower bound is mainly divided into two steps.\par
 \textbf{Step 1}. In this step, we will show that there exists a constant $C_M>0$ such that for $n$ large enough, $\mathbb{P}\Big(\bar{Z}_n(\sqrt nA)\geq p\Big)\geq C_M\mathbb{P}(E)$. Let $\{Z^{i}_n\}_{n\geq 0}, i\geq 1$ be i.i.d copies of $\{Z_n\}_{n\geq 0}$. Let $x_v$ be the displacement of particle $v$ and $W^{v}_{n-t_n-1}:=|Z^{v}_{n-t_n-1}|m^{-(n-t_n-1)}$. By Markov property, we have
\begin{align}\label{paretolow}
&\mathbb{P}\Big(\bar{Z}_n(\sqrt nA)\geq p\Big)\cr
&\geq\mathbb{P}\left(\exists u\in Z_{t_n},\frac{\sum\limits_{v\in Z^u_{1}}Z^{v}_{n-t_n-1}(\sqrt nA-x_v-S_u)+
\sum\limits_{w\in H(u)}
Z^{w}_{n-t_n-1}(\sqrt nA-S_w)}
{\sum\limits_{v\in Z^u_{1}}|Z^{v}_{n-t_n-1}|+
\sum\limits_{w\in H(u)}
|Z^{w}_{n-t_n-1}|}\geq p,\frac{\sum\limits_{v\in Z^u_{1}}|Z^{v}_{n-t_n-1}|}{\sum\limits_{w\in H(u)}
|Z^{w}_{n-t_n-1}|}>M \right)\cr
&\geq\displaystyle {\int_{\mathcal{E}}\mathbb{P}\left(\frac{\sum^k\limits_{i=1}Z^{i}_{n-t_n-1}(\sqrt nA-x_v-S_z)}
{\sum^k\limits_{i=1}|Z^{i}_{n-t_n-1}|+
\sum^r\limits_{j=1}
|Z^{j}_{n-t_n-1}|}\geq p,\frac{\sum^k\limits_{i=1}|Z^{i}_{n-t_n-1}|}{\sum^r\limits_{j=1}
|Z^{j}_{n-t_n-1}|}>M \right)}\mathbb{P}\left(Z_{t_n}\in d\xi,|Z^{z}_{1}|=k,|H(z)|=r\right)\cr
&\geq\int_{\mathcal{E}}\mathbb{P}\left(\frac{\sum^k\limits_{i=1}Z^{i}_{n-t_n-1}(\sqrt nA-x_v-S_z)}{(1+M^{-1})\sum^k\limits_{i=1}|Z^{i}_{n-t_n-1}|}\geq p, \frac{\sum^k\limits_{i=1}{W^{i}_{n-t_n-1}
}}{\sum^r\limits_{j=1}W^{j}_{n-t_n-1}}>M \right)\mathbb{P}\left(Z_{t_n}\in d\xi,|Z^{z}_{1}|=k, |H(z)|=r\right)\cr
&=:\int_{\mathcal{E}} P_n(\xi,k,r)\mathbb{P}\left(Z_{t_n}\in d\xi, |Z^{z}_{1}|=k, |H(z)|=r\right),
\end{align}
where $\mathbb{P}(Z_{t_n}\in d\xi)$ stands for the distribution of the point process $Z_{t_n}$ (for serious definition of  point process's distribution see \cite[Sec 2.1]{Anton}). To finish Step 1, it suffices to show that
\begin{equation}\label{C}
\lim\limits_{n\rightarrow\infty}\inf\limits_{(\xi,k,r)\in \mathcal{E}}P_n(\xi,k,r)>0.
\end{equation}
Since $A\in\mathcal{A}$ (see Assumption \ref{assume}), we can write $A=\sum\limits^l_{i=1}(a_i,b_i]$ for some natural number $l$, where $-\infty\leq a_i<b_i\leq+\infty$. Let $$A(x,\eta):=\sum\limits^l_{i=1}\left(a_i-x+\eta,b_i-x-\eta\right).$$
 Since $S_z\in[(x-\eta)\sqrt n,(x+\eta)\sqrt n]$, we can choose $\eta$ small enough such that
\begin{equation}\label{Ae}
\frac{1}{1+M^{-1}}\nu(A(x,\eta))>p+\frac{\delta}{4}~\text{and}~\sqrt nA(x,\eta)\subset \sqrt nA-S_z.
\end{equation}
Thus, by central limit theorem, there exists a constant $C(M,h,x,\eta,\delta)>0$ such that for $n>C(M,h,x,\eta,\delta)$,
\begin{equation}\label{sdewd}
\frac{1}{1+M^{-1}}\nu_{n-t_n}(\sqrt nA(x,\eta))>p+\frac{\delta}{8}.
\end{equation}
 Recall that $Z^{i}_{n-t_n-1}$, $W^{i}_{n-t_n-1}$ and $X_i$, $1\leq i\leq k$ are respectively i.i.d copies of $Z_{n-t_n-1}$, $W_{n-t_n-1}$ and $X$. Since $\sqrt nA(x,\eta)\subset \sqrt nA-S_z$, by (\ref{paretolow}), we have
$$P_n(\xi,k,m)\geq \mathbb{P}\left(\frac{\sum\limits^k_{i=1}Z^{i}_{n-t_n-1}(\sqrt nA(x,\eta)-X_i)}{(1+M^{-1})\sum\limits^k_{i=1}|Z^{i}_{n-t_n-1}|}\geq p,~\frac{\sum\limits^k_{i=1}{W^{i}_{n-t_n-1}
}}{\sum\limits^{\lfloor\frac{k}{2M}\rfloor}_{j=1}W^{j}_{n-t_n-1}}>M\right)=:\mathbb{E}\left[\ind_{\{A_{n,k}\geq p\}}\ind_{\{B_{n,k}\geq M\}}\right].$$
Fix $k> 2M$ (since $(\xi,k,r)\in\mathcal{E}$) and $\epsilon'\in(0,\eta)$. There exists some random variable $N(\epsilon', k)>0$ such that for $n>N(\epsilon', k)$ and $1\leq i\leq k$,
$$\sqrt nA(x,\eta+\epsilon')\subset\sqrt nA(x,\eta)-X_i\subset \sqrt nA(x,\eta-\epsilon').$$
Thus
      $$\frac{\sum\limits^k_{i=1}m^{-(n-t_n-1)}Z^{i}_{n-t_n-1}(\sqrt nA(x,\eta+\epsilon'))}{(1+M^{-1})\sum\limits^k_{i=1}m^{-(n-t_n-1)}|Z^{i}_{n-t_n-1}|}\leq A_{n,k}\leq \frac{\sum\limits^k_{i=1}m^{-(n-t_n-1)}Z_{n-t_n-1}^{i}(\sqrt nA(x,\eta-\epsilon'))}{(1+M^{-1})\sum\limits^k_{i=1}m^{-(n-t_n-1)}|Z^{i}_{n-t_n-1}|}.$$
Since $\bar{Z}_n(\sqrt nA)\rightarrow\nu(A)$ and $W_n\rightarrow W$, the above implies that
$$\lim\limits_{n\rightarrow \infty}A_{n,k}= \frac{1}{1+M^{-1}}\nu(A(x,\eta)),~ \mathbb{P}-a.s.$$
On the other hand, it is easy to see that
$$~~\lim\limits_{n\rightarrow\infty}B_{n,k}=\frac{\sum\limits^k_{i=1}{W^{i}
}}{\sum\limits^{\lfloor\frac{k}{2M}\rfloor}_{j=1}W^{j}}=:B_k,~~\mathbb{P}-a.s.,$$
where $W^i$ and $W^j$ are i.i.d copies of $W:=\lim\limits_{n\rightarrow\infty}|Z_n|/m^n$. Hence, by the dominated convergence theorem and (\ref{Ae}), for any fixed $k>2M$, we have
\begin{equation}\label{as}
\lim\limits_{n\rightarrow \infty}\mathbb{E}\left[\ind_{\{A_{n,k}\geq p\}}\ind_{\{B_{n,k}\geq M\}}\right]=\mathbb{P}\left(B_k\geq M\right).
\end{equation}
 So, to achieve (\ref{C}), it suffices to show:
 \begin{equation}\label{uni}
 \lim\limits_{n\rightarrow\infty}\sup_{k> 2M}\left|\mathbb{E}\left[\ind_{\{A_{n,k}\geq p\}}\ind_{\{B_{n,k}\geq M\}}\right]-\mathbb{P}\left(B_k\geq M\right)\right|=0,
\end{equation}
and
\begin{equation}\label{dfdfsd}
\inf_{k>2M}\mathbb{P}\left(B_k>M\right)>0.
\end{equation}
By the strong law of large numbers, one can easily obtain $\inf\limits_{k>2M}\mathbb{P}\left(B_k>M\right)>0$. For (\ref{as}), observe that
\begin{align}\label{ap}
&\left|\mathbb{E}\left[\ind_{\{A_{n,k}\geq p\}}\ind_{\{B_{n,k}\geq M\}}\right]-\mathbb{P}\left(B_k\geq M\right)\right|\cr
&\leq\mathbb{P}(A_{n,k}<p)+\mathbb{P}(B_{n,k}\geq M,B_k<M)+\mathbb{P}(B_{n,k}<M,B_k\geq M).
\end{align}
For the first term on the r.h.s of (\ref{ap}), let $\xi=\sum^k_{i=1}\delta_{X_i}$ be a point process (recall that $X_i$, $i\geq 1$ are i.i.d copies of $X$). By Remark \ref{pointlouidor} and (\ref{sdewd}), there exists a constant $C_1>0$ depending on $\delta$, $\beta$ such that for $n>C(M,h,x,\eta,\delta)$,
\begin{align}\label{dferfwer}
\mathbb{P}(A_{n,k}<p)&=\mathbb{P}\left(\frac{\sum\limits^k_{i=1}Z^{i}_{n-t_n-1}(\sqrt nA(x,\eta)-X_i)}{(1+M^{-1})\sum\limits^k_{i=1}|Z^{i}_{n-t_n-1}|}<p \right)\cr
&=\mathbb{P}\left(\bar{Z}^{\xi}_{n-t_n-1}(\sqrt nA(x,\eta))<p(1+M^{-1})\right)\cr
&\leq \mathbb{P}\left(\bar{Z}^{\xi}_{n-t_n-1}(\sqrt nA(x,\eta))<\nu_{n-t_n}(\sqrt nA(x,\eta))-\frac{\delta}{8}\right)\cr
&=\mathbb{P}\left(\bar{Z}^{\xi}_{n-t_n-1}(\sqrt nA(x,\eta))<\mathbb{E}\left[\frac{1}{|\xi|}\sum_{z\in\xi}\nu_{n-t_n-1}(\sqrt nA(x,\eta)-S_z)\right]-\frac{\delta}{8}\right)\cr
&\leq C_1k^{1-\beta},
\end{align}
where the third equality holds since
 $$\nu_{n-t_n}\left(\sqrt nA(x,\eta)\right)=\mathbb{P}\left(X_1+X_2+...+X_{n-t_n}\in\sqrt nA(x,\eta)\right)=\mathbb{E}\left[\nu_{n-t_n-1}(\sqrt nA(x,\eta)-S_z)\right].$$
For the second term on the r.h.s of (\ref{ap}), by the strong law of large numbers, we have
\begin{equation}\label{dferfwer1}
\mathbb{P}(B_{n,k}\geq M,B_k<M)\leq\mathbb{P}(B_k<M)\rightarrow 0,~\text{as}~ k\rightarrow\infty.
\end{equation}
For the third term on the r.h.s of (\ref{ap}), there exists a constant $C(M,\beta)>0$ such that for any $k>2M$ and $n\geq1$,
\begin{align}\label{dferfwer2}
&\mathbb{P}(B_{n,k}<M,B_k\geq M)\cr
&\leq\mathbb{P}(B_{n,k}<M)\cr
&=\mathbb{P}\left(\frac{\sum\limits^k_{i=1}{W^{i}_{n-t_n-1}
}}{\sum\limits^{\lfloor\frac{k}{2M}\rfloor}_{j=1}W^{j}_{n-t_n-1}}<M\right)\cr
&\leq\mathbb{P}\left(\sum\limits^k_{i=1}W^{i}_{n-t_n-1}<M\sum\limits^{\lfloor\frac{k}{2M}\rfloor}_{j=1}W^{j}_{n-t_n-1},
\sum\limits^{\lfloor\frac{k}{2M}\rfloor}_{j=1}W^{j}_{n-t_n-1}<\frac{k}{2M}\frac{3}{2}\right)+
\mathbb{P}\left(\sum\limits^{\lfloor\frac{k}{2M}\rfloor}_{j=1}W^{j}_{n-t_n-1}\geq\frac{k}{2M}\frac{3}{2}\right)\cr
&\leq\mathbb{P}\left(\sum\limits^k_{i=1}W^{i}_{n-t_n-1}<\frac{3k}{4}\right)+
\mathbb{P}\left(\sum\limits^{\lfloor\frac{k}{2M}\rfloor}_{j=1}W^{j}_{n-t_n-1}\geq\frac{k}{2M}\frac{3}{2}\right)\cr
&\leq\mathbb{P}\left(\sum\limits^k_{i=1}\left(1-W^{i}_{n-t_n-1}\right)>\frac{k}{4}\right)+
\mathbb{P}\left(\sum\limits^{\lfloor\frac{k}{2M}\rfloor}_{j=1}\left(W^{j}_{n-t_n-1}-1\right)\geq\frac{k}{2M}\frac{1}{2}\right)\cr
&\leq C(M,\beta)k^{1-\beta},
\end{align}
where for the last inequality, we use exactly the same arguments as bounding $I_1$ and $I_2$ in Lemma \ref{louidorlem}. Plugging (\ref{dferfwer}), (\ref{dferfwer1}) and (\ref{dferfwer2}) into (\ref{ap}), we obtain that for every $\epsilon>0$ there exists a constant $C(\epsilon,M,\beta,\delta)$ such that for $k>C(\epsilon,M,\beta,\delta)$ and $n>C(M,h,x,\eta,\delta)$,
$$\left|\mathbb{E}\left[\ind_{\{A_{n,k}\geq p\}}\ind_{\{B_{n,k}\geq M\}}\right]-\mathbb{P}\left(B_k\geq M\right)\right|<\epsilon.$$
Combining this with (\ref{as}), there exists some large constant $C(\epsilon,M,\beta,h,x,\eta,\delta,p)$ such that for $n>C(\epsilon,M,\beta,h,x,\eta,\delta,p)$ and $k>2M$,
$$\left|\mathbb{E}\left[\ind_{\{A_{n,k}\geq p\}}\ind_{\{B_{n,k}\geq M\}}\right]-\mathbb{P}\left(B_k\geq M\right)\right|<\epsilon$$
Thus (\ref{uni}) holds. This, combined with (\ref{dfdfsd}) and (\ref{paretolow}), implies that there exist positive constants $C_M$ and $C(M,\beta,h,x,\eta,\delta,p)$ such that for $n>C(M,\beta,h,x,\eta,\delta,p)$,
$$\mathbb{P}\Big(\bar{Z}_n(\sqrt nA)\geq p\Big)\geq C_M\mathbb{P}(E),$$
which completes Step 1.\par
 \textbf{Step 2}. In this step, we will give a lower bound of $\mathbb{P}(E)$. Define $\mathcal{F}_{t_n}:=\sigma\left(Z_{i},~1\leq i\leq t_n\right)$. By Step 1, for $n>C(M,\beta,h,x,\eta,\delta,p)$,
 \begin{align}\label{bb}
\mathbb{P}\left(\bar{Z}_n(\sqrt nA)\geq p\right)
&\geq C_M\mathbb{P}\left(\exists u\in Z_{t_n},S_u\in[(x-\eta)\sqrt n,(x+\eta)\sqrt n],|Z^{u}_{1}|>2M\sum_{v\neq u,v\in Z_{t_n}}|Z^{v}_{1}|\right)\cr
&=C_M\mathbb{E}\left[\sum_{u\in Z_{t_n}}\ind_{\{S_u\in[(x-\eta)\sqrt n,(x+\eta)\sqrt n]\}}\ind_{\{|Z^{u}_{1}|>2M\sum\limits_{v\neq u, v\in Z_{t_n}}|Z^{v}_{1}|\}}\right]\cr
&=C_M\mathbb{E}\left[\sum_{u\in Z_{t_n}}\ind_{\{S_u\in[(x-\eta)\sqrt n,(x+\eta)\sqrt n]\}}\mathbb{E}\left[\ind_{\{|Z^{u}_{1}|>2M\sum\limits_{v\neq u, v\in Z_{t_n}}|Z^{v}_{1}|\}}\Big{|}\mathcal{F}_{t_n}\right]\right],
\end{align}
where the first equality follows from the fact that the random variable inside the expectation can only be 0 or 1. Let $k_i$, $i\geq 0$ be i.i.d copies of $|Z_1|$, and independent with $Z_{t_n}$. Since $|Z_1|\sim$Pareto($\beta$), there exists a constant $C_4>0$ such that $\mathbb{P}(|Z_1|>x)>C_4 x^{-\beta}$, for all $x>1$. Recall a well known fact: if $U$, $V$ are independent random variables, then for any bounded measurable function $F(x,y)$, we have
 $$\mathbb{E}[F(U,V)|\sigma(V)]=\mathbb{E}[F(U,v)]|_{v=V}.$$
Using this fact, we have
\begin{align}\label{sdfaerf}
\mathbb{E}\left[\ind_{\{|Z^{u}_{1}|>2M\sum\limits_{v\neq u, v\in Z_{t_n}}|Z^{v}_{1}|\}}\Big{|}\mathcal{F}_{t_n}\right]&=
\mathbb{E}\left[\ind_{\{k_0>2M\sum^{|Z_{t_n}|-1}_{i=1}k_i\}}\Big{|}\mathcal{F}_{t_n}\right]\cr
&=\mathbb{E}\left[\ind_{\{k_0>2M\sum^{j}_{i=1}k_i\}}\right]\Big{|}_{j=|Z_{t_n}|-1}\cr
&=\mathbb{E}\left[\mathbb{E}\left[\ind_{\{k_0>2M\sum^{j}_{i=1}k_i\}}\Big{|}\sigma(k_1,...,k_j)\right]\right]\Big{|}_{j=|Z_{t_n}|-1}\cr
&\geq C_4\mathbb{E}\left[\left(2M\sum^{j}_{i=1}k_i\right)^{-\beta}\right]\Big{|}_{j=|Z_{t_n}|-1}\cr
&= C_4\mathbb{E}\left[\left(2M\sum^{|Z_{t_n}|-1}_{i=1}k_i\right)^{-\beta}\Big{|}\sigma(|Z_{t_n}|)\right]\cr
&\geq C_4(2M)^{-\beta}\mathbb{E}\left[  |Z_{t_n+1}|^{-\beta}\Big{|}\mathcal{F}_{t_n}\right].
\end{align}
Plugging (\ref{sdfaerf}) into (\ref{bb}), yields that for $n>C(M,\beta,h,x,\eta,\delta,p)$,
\begin{align}\label{sadfefw}
\mathbb{P}\left(\bar{Z}_n(\sqrt nA)\geq p\right)&\geq C_MC_4(2M)^{-\beta}\mathbb{E}\left[\mathbb{E}\left[\sum_{u\in Z_{t_n}}\ind_{\{S_u\in[(x-\eta)\sqrt n,(x+\eta)\sqrt n]\}} |Z_{t_n+1}|^{-\beta}\Big{|}\mathcal{F}_{t_n}\right]\right]\cr
&= C_MC_4(2M)^{-\beta}\mathbb{E}\left[\frac{|Z_{t_n}|}{|Z_{t_n+1}|^{\beta}}\right]\nu_{t_n}\left([(x-\eta)\sqrt n,(x+\eta)\sqrt n]\right),
 \end{align}
where the last equality follows from the fact that the branching and  motion are independent. By Fatou's Lemma, for $n$ large enough,
\begin{align}\label{bbb}
\mathbb{E}\left[\frac{|Z_{t_n}|}{|Z_{t_n+1}|^{\beta}}\right]
=m^{-\beta}m^{-(\beta-1)t_n}\mathbb{E}\left[\frac{W_{t_n}}{(W_{t_{n+1}})^{\beta}}\right]
\geq 0.9\mathbb{E}\left[W^{1-\beta}\right] m^{-\beta}m^{-(\beta-1)t_n}.
\end{align}
Plugging (\ref{bbb}) into (\ref{sadfefw}), yields that there exists a constant $C'(\epsilon,M,\beta,h,x,\eta,\delta,p)$ such that for $n>C'(\epsilon,M,\beta,h,x,\eta,\delta,p)$,
\begin{align}
\mathbb{P}\Big(\bar{Z}_n(\sqrt nA)\geq p\Big)&\geq C_MC_4(2M)^{-\beta}0.9m^{-\beta}\mathbb{E}\left[W^{1-\beta}\right]m^{-(\beta-1)t_n}\nu_{t_n}\left([(x-\eta)\sqrt n,(x+\eta)\sqrt n]\right)\cr
&\geq C_MC_4(2M)^{-\beta}0.9m^{-\beta}\mathbb{E}\left[W^{1-\beta}\right] m^{-(\beta-1)h\sqrt n}e^{-(\Lambda(\frac{x-\eta}{h})+\epsilon)h\sqrt n},\nonumber
\end{align}
where the last inequality follows by Cram\'er's theorem. Hence, for every $\epsilon,~\eta$ small enough and $h>0$, the above yields that
 $$\liminf_{n\rightarrow\infty}\frac{1}{\sqrt n}\log\mathbb{P}\Big(\bar{Z}_n(\sqrt nA)\geq p\Big)\geq-\left\{h(\beta-1)\log m+\left(\Lambda\left(\frac{x-\eta}{h}\right)+\epsilon \right)h\right\}.$$
First let $\epsilon\rightarrow 0$, and then maximum  the lower bound with $h$, we finally obtain that
 $$\liminf_{n\rightarrow\infty}\frac{1}{\sqrt n}\log\mathbb{P}\Big(\bar{Z}_n(\sqrt nA)\geq p\Big)\geq-\inf_{h>0}\left\{h(\beta-1)\log m+\Lambda\left(\frac{I_A(p)}{h}\right)h\right\},$$
 which concludes the desired lower bound for the case of $\beta-1<\frac{-\log{p_1}}{\log m}$. For $\beta-1\geq \frac{-\log{p_1}}{\log m}$, \cite[Lemma 3.1]{ChenHe} gives
 $$
 \liminf_{n\rightarrow\infty}\frac{1}{\sqrt n}\log\mathbb{P}(\bar{Z}_n(\sqrt nA)\geq p)\geq -\inf_{h>0}\left\{h\log \frac{1}{p_1}+\Lambda\left(\frac{I_A(p)}{h}\right)h\right\}.
 $$

 \bigskip
 \textbf{ Upper bound}. By the definition of $I_A(p)$, for every $\eta\in(0,I_A(p))$, there exists $\delta>0$ such that
\begin{equation}\label{copyuper1}
\sup\limits_{|y|\leq I_A(p)-\eta}\nu(A-y)\leq p-\delta.
\end{equation}
 Set $B_n:=[(-I_A(p)+\eta)\sqrt n,(I_A(p)-\eta)\sqrt n]$, $\mathcal{M}_1:=\{\xi\in \mathcal{M}:\frac{\xi(B_n^c)}{|\xi|}\leq \frac{\delta}{2}\}$, $t_n:=\lfloor h\sqrt n\rfloor$ for some $h>0$. By \cite[(4.13)]{ChenHe},  for every $\xi\in\mathcal{M}_1$ and $n$ large enough, we have
$$
\frac{1}{|\xi|}\sum\limits_{z\in\xi}\nu_{n-{t_n}}(\sqrt n A-S_z)+\frac{\delta}{4}\leq \frac{1}{|\xi|}\sum\limits_{z\in\xi}\nu_{n-{t_n}}(\sqrt n A-S_z)\leq p.
$$
Hence, by Lemma \ref{louidorlem} and Markov inequality, for $n$ large enough,
\begin{align}\label{copyuper2}
&\mathbb{P}\left(\bar{Z}_{n}(\sqrt n A)\geq p\right)\cr
&\leq \mathbb{P}\left(\bar{Z}_{t_n}(B_n^c)\geq \frac{\delta}{2}\right)+\mathbb{P}\left(\bar{Z}_{t_n}(B_n^c)\leq \frac{\delta}{2},\bar{Z}_{n}(\sqrt n A)\geq p\right)\cr
&\leq \mathbb{E}\left[\frac{2}{\delta}\frac{Z_{t_n}(B_n^c)}{|Z_{t_n}|}\right]+\int_{\mathcal{M}_1}\mathbb{P}\left(\bar{Z}^\xi_{n-t_n}(\sqrt nA)\geq \frac{1}{|\xi|}\sum_{z\in\xi}\nu_{n-t_n}(\sqrt nA-S_z)+\frac{\delta}{4}\right)\mathbb{P}(Z_{t_n}\in d\xi)\cr
&\leq \mathbb{E}\left[\frac{2}{\delta}\frac{Z_{t_n}(B_n^c)}{|Z_{t_n}|}\right]+C_1\mathbb{E}\left[|Z_{t_n}|^{-(\beta-1)}\right].
\end{align}
For the first term  on the r.h.s above of (\ref{copyuper2}), define $\mathcal{G}_{t_n}:=\sigma(|Z_i|, 1\leq i\leq t_n)$. Since the branching and motion are independent, then for $n$ large enough,
\begin{equation}\label{werre}
\mathbb{E}\left[\frac{2}{\delta}\frac{Z_{t_n}(B_n^c)}{|Z_{t_n}|}\right]
=\mathbb{E}\left[\mathbb{E}\left[\frac{2}{\delta}\frac{Z_{t_n}(B_n^c)}{|Z_{t_n}|}\Big|\mathcal{G}_{t_n}\right]\right]
= \frac{4}{\delta}\nu_{t_n}\left([(I_A(p)-\eta)\sqrt n)\right)\leq\frac{4}{\delta} e^{-\left(\Lambda\left(\frac{I_A(p)-\eta}{h}\right)-\epsilon\right)h\sqrt n},
\end{equation}
where the last inequality follows by Cram\'er's theorem. For the second term  on the r.h.s of (\ref{copyuper2}), by Lemma \ref{Neyharmonic}, for $n$ large enough,
\begin{equation}\label{jkuy}
\mathbb{E}\left[|Z_{t_n}|^{-(\beta-1)}\right]
\leq\begin{cases}
2C_0 m^{-(\beta-1)h\sqrt n}, &\beta-1\geq \frac{-\log{p_1}}{\log m}; \cr
2C_0 p_1^{h\sqrt n}, &0<\beta-1<\frac{-\log{p_1}}{\log m}.
\end{cases}
\end{equation}
Plugging (\ref{werre}) and (\ref{jkuy}) into (\ref{copyuper2}), yields that: if $\beta-1\geq \frac{-\log{p_1}}{\log m}$, then
$$\mathbb{P}\left(\bar{Z}_{n}(\sqrt n A)\geq p\right)\leq \frac{4}{\delta}e^{-\left(\Lambda\left(\frac{I_A(p)-\eta}{h}\right)-\epsilon\right)h\sqrt n}+2C_0 C_1(p,\delta,h)m^{-(\beta-1)h\sqrt n};$$
if $0<\beta-1<\frac{\log \frac{1}{p_1}}{\log m}$, then
$$\mathbb{P}\left(\bar{Z}_{n}(\sqrt n A)\geq p\right)\leq \frac{4}{\delta}e^{-\left(\Lambda\left(\frac{I_A(p)-\eta}{h}\right)-\epsilon\right)h\sqrt n}+2C_0 C_1(p,\delta,h)p_1^{h\sqrt n}.$$
 So, the upper bound follows by optimizing $h$ on $(0,+\infty)$.
\end{proof}
\section{Proof of Theorem \ref{main5}.}\label{offspringweibull}
In this section, we consider the case that the offspring law has Weibull tail, i.e., $\mathbb{P}(|Z_1|>x)\sim l_1e^{-l x^{\beta}}$ as $x\rightarrow\infty$ for some constants $\beta\in(0,1)$ and $l_1,~l\in(0,+\infty)$. We assume that $I_A(p)<\infty$ and $I_A(\cdot)$ is continuous at $p$. Comparing with the case that the offspring law has exponential moment, the results and proofs have no change in the Schr\"{o}der case. However, in the B\"{o}ttcher case, things become different--the B\"{o}ttcher constant will appear in the rate function. So in this section, we further assume $p_1=0$. Moreover, unlike the Pareto case, the tail of step size matters for the decay scale of LDP. To show this, we will investigate two types of step size--the bounded step size and Weibull tail step size.
\subsection{Proof of (\ref{weibullbounded}).}
In this subsection, we assume $p_1=0$ and  $0<\text{ess}\sup X=L<\infty$. We are going to show that
$$\lim_{n\rightarrow\infty}\frac{1}{\sqrt{n}}\log\left[-\log \mathbb{P}(\bar{Z}_n(\sqrt nA)\geq p)\right]=\frac{I_A(p)}{L} \frac{\rho\beta}{\beta+\rho-\beta\rho}\log m.$$

\begin{proof}
\textbf{Lower bound}. By the continuity of $I_A(\cdot)$ at $p$, for every $\varepsilon>0$, there exists $\delta>0$ such that $|I_A(p+2\delta)-I_A(p)|<\varepsilon$. Furthermore, it is easy to see that there exists $x\in\mathbb{R}$ such that $\nu(A-x)=p+2\delta$. So, by the continuity of $\nu(A-\cdot)$, there exists some $\eta>0$ such that
$$\inf_{z\in\left(x,x+\frac{2\eta x}{L}\right)}\nu(A-z)\geq p+\delta,$$
where $(b,a):=(a,b)$, if $a<b$. Let $t_n:=\left\lceil\frac{|x|\sqrt n}{(L-\eta)}\right\rceil$ and
$$E:=\left\{\exists u\in Z_{t_n},~S_u\in\left(x\sqrt n,x\sqrt n+\frac{2\eta x\sqrt n}{L}\right),~|Z^{u}_{1}|>2M\sum_{v\neq u,v\in Z_{t_n}}|Z^{v}_{1}|\right\}.$$
Since $|Z_1|\sim$Weibull$(\beta)$, there exist constants $c_7>0$ and $l>0$ such that $\mathbb{P}(|Z_1|>y)>c_7e^{-ly^\beta}$ for all $y>0$. By similar arguments for the lower bound in Theorem \ref{main1}, there exist constants $C_M>0$ and $T(\eta,x)>0$ such that for $n$ large enough,
\begin{align}\label{efewfw}
\mathbb{P}\Big(\bar{Z}_n(\sqrt nA)\geq p\Big)&\geq C_M\mathbb{P}(E)\cr
&=C_M\mathbb{P}\left(S_{t_n}\in\left(x\sqrt n,x\sqrt n+\frac{2\eta x\sqrt n}{L}\right)\right)c_7\mathbb{E}\left[|Z_{t_n}|e^{-l(2M)^{\beta}|Z_{t_n+1}|^{\beta}}\right]\cr
&\geq C_Me^{-T(\eta,x)t_n}\mathbb{E}\left[e^{-l(2M)^{\beta}|Z_{t_n+1}|^{\beta}}\right],
\end{align}
where the last inequality follows by Cram\'er's theorem and the fact that $|x\sqrt n|\leq \left\lceil\frac{|x|\sqrt n}{(L-\eta)}\right\rceil L$.
Hence, by Lemma \ref{Weibullmoment}, we have
$$\limsup_{n\rightarrow\infty}\frac{1}{\sqrt n}\log\left[-\log \mathbb{P}(\bar{Z}_n(\sqrt nA)\geq p)\right]\leq \frac{|x|}{L-\eta}\frac{\rho\beta}{\beta+\rho-\beta\rho}\log m.$$
Finally, by letting $\eta\rightarrow0$ and $\varepsilon\rightarrow0$, we obtain the desired lower bound.

\bigskip
\textbf{Upper bound}. For any $\varepsilon\in(0,I_A(p))$, set $t_n:=\lfloor (I_A(p)-\varepsilon)\sqrt n/L\rfloor$. By Lemma \ref{Weibulllemma} and arguments from (2.30) to (2.33) in \cite{Louidor}, there exists $\delta>0$ such that for $n$ large enough,
\begin{align}
\mathbb{P}\Big(\bar{Z}_n(\sqrt nA)\geq p\Big)&\leq \int_{\mathcal{M}}\mathbb{P}\left(\bar{Z}^\xi_{n-t_n}(\sqrt nA)\geq \frac{1}{|\xi|}\sum_{z\in\xi}\nu_{n-t_n}(\sqrt nA-S_z)+\frac{\delta}{3}\right)\mathbb{P}(Z_{t_n}\in d\xi)\cr
 &\leq C_2\mathbb{E}(e^{-C_3|Z_{t_n}|^{\beta}}).\nonumber
 \end{align}
Thus, by Lemma \ref{Weibullmoment}, we have
$$\liminf_{n\rightarrow\infty}\frac{1}{\sqrt n}\log\left[-\log \mathbb{P}(\bar{Z}_n(\sqrt nA)\geq p)\right]\geq \frac{I_A(p)-\varepsilon}{L}\frac{\rho\beta}{\beta+\rho-\beta\rho}\log m.$$
At last, the desired upper bound follows by letting $\varepsilon\rightarrow 0$.
\end{proof}
\subsection{Proof of (\ref{weibullweibull}).}
In this subsection, we consider the case: the step size $X$ satisfies $\mathbb{P}(X>x)=\Theta(1)e^{-\lambda x^{\alpha}}$ as $x\rightarrow\infty$ for some $\alpha\in(0,\infty)$, $\lambda>0$. We are going to show: if $\alpha>1$, then
$$\lim\limits_{n\rightarrow\infty}\frac{(\log n)^{\alpha-1}}{n^{\alpha/2}}
\log\mathbb{P}(\bar{Z}_n(\sqrt nA)\geq p)= -\lambda {I_A(p)}^{\alpha}\left(\frac{2\beta\rho\log m}{\alpha(\beta+\rho-\beta\rho)}\right)^{\alpha-1}.$$
For $\alpha\in(0,1]$, using the same arguments as \cite[Sec 4.1.1]{ChenHe} and Lemma \ref{Weibulllemma}, we can easily obtain
 $$\lim\limits_{n\rightarrow\infty}\frac{1}{n^{\alpha/2}}
\log\mathbb{P}(\bar{Z}_n(\sqrt nA)\geq p)= -\lambda {I_A(p)}^{\alpha}.$$
So, we feel free to omit its proof here.
\begin{proof}
\textbf{Lower bound}. Fix $\epsilon>0$. There exist some $\eta,\delta>0$ and $|x|<I_A(p)+\epsilon$ such that
    $$\inf_{y\in(x-\eta,x+\eta)}\nu(A-y)\geq p+\delta.$$
    Let
    $$E:=\left\{\exists u\in Z_{t_n},~S_u\in\left((x-\eta)\sqrt n,(x+\eta)\sqrt n\right),~|Z^{u}_{1}|>2M\sum_{v\neq u,v\in Z_{t_n}}|Z^{v}_{1}|\right\},$$
    where $t_n:=\lfloor t\log n\rfloor$ for some $0<t<\frac{\alpha(\beta+\rho-\beta\rho)}{2\log(m+\epsilon)\beta\rho}$. Similar to (\ref{efewfw}), we have for $n$ large enough,
\begin{align}
\mathbb{P}\Big(\bar{Z}_n(\sqrt nA)\geq p\Big)&\geq C_M\mathbb{P}(E)\cr
&=C_M\mathbb{P}\left(S_{t_n}\in\left((x-\eta)\sqrt n,(x+\eta)\sqrt n\right)\right)c_7\mathbb{E}\left[|Z_{t_n}|e^{-l(2M)^{\beta}|Z_{t_n+1}|^{\beta}}\right]\cr
&\geq c_7C_M\exp\left\{-(\lambda+\epsilon)|x-\eta|^{\alpha}\frac{n^{\alpha/2}}{(t\log n)^{\alpha-1}}\right\}\mathbb{E}\left[e^{-l(2M)^{\beta}|Z_{t_n+1}|^{\beta}}\right]\cr
&\geq c_7C_M\exp\left\{-(\lambda+\epsilon)|x-\eta|^{\alpha}\frac{n^{\alpha/2}}{(t\log n)^{\alpha-1}}\right\}
\exp\left(-n^\frac{\beta\rho t\log (m+\epsilon)}{\beta+\rho-\beta\rho}\right)\cr
&\geq c_7C_M\exp\left\{-(\lambda+2\epsilon) |x-\eta|^{\alpha}\frac{n^{\alpha/2}}{(t\log n)^{\alpha-1}}\right\},\nonumber
\end{align}
where the second inequality follows from Lemma \ref{WeibullLDP}, the third inequality comes from Lemma \ref{Weibullmoment}, and the last inequality follows from the fact that $t<\frac{\alpha(\beta+\rho-\beta\rho)}{2\log(m+\epsilon)\beta\rho}$. Hence, for any $\epsilon>0$, some $\eta>0$, $|x|<I_A(p)+\epsilon$ and any $0<t<\frac{\alpha(\beta+\rho-\beta\rho)}{2\log(m+\epsilon)\beta\rho}$, we have
$$\liminf\limits_{n\rightarrow\infty}\frac{(\log n)^{\alpha-1}}{n^{\alpha/2}}
\log\mathbb{P}(\bar{Z}_n(\sqrt nA)\geq p)\geq -(\lambda+\epsilon)\frac{|x-\eta|^{\alpha}}{t^{\alpha-1}}.$$
Finally, by letting $\epsilon\rightarrow 0$, $t\rightarrow \frac{\alpha(\beta+\rho-\beta\rho)}{2\log m\beta\rho}$ gives the desired lower bound.

\bigskip
\textbf{Upper bound}. Set $t_n:= \lfloor t\log n\rfloor$ for $t>\frac{\alpha(\beta+\rho-\beta\rho)}{2\beta\rho\log(m-\epsilon)}$;  $B_n:=[(-I_A(p)+\eta)\sqrt n,(I_A(p)-\eta)\sqrt n]$. Using the arguments from (\ref{copyuper1}) to (\ref{copyuper2}), there exists some $\delta>0$ such that for $n$ large enough,
\begin{align}
&\mathbb{P}\left(\bar{Z}_{n}(\sqrt n A)\geq p\right)\cr
&\leq \mathbb{E}\left[\frac{2}{\delta}\frac{Z_{t_n}(B_n^c)}{|Z_{t_n}|}\right]+\int_{\mathcal{M}_1}\mathbb{P}\left(\bar{Z}^\xi_{n-t_n}(\sqrt nA)\geq \frac{1}{|\xi|}\sum_{z\in\xi}\nu_{n-t_n}(\sqrt nA-S_z)+\frac{\delta}{4}\right)\mathbb{P}(Z_{t_n}\in d\xi)\cr
&\leq \frac{2}{\delta}\mathbb{P}(S_{t_n}\in B^c_n)+C_2\mathbb{E}\left[e^{-C_3|Z_{t_n}|^{\beta}}\right],\cr
\end{align}
 where the last inequality follows from Lemma \ref{Weibulllemma}. As a consequence, by Lemma \ref{WeibullLDP} and Lemma \ref{Weibullmoment}, for $n$ large enough,
\begin{align}
\mathbb{P}\left(\bar{Z}_{n}(\sqrt n A)\geq p\right)&\leq  \exp\left\{-\lambda (I_A(p)-\eta-\epsilon)^{\alpha}\frac{n^{\alpha/2}}{(t\log n)^{\alpha-1}}\right\}+C_2\exp\left(-n^\frac{\beta\rho t\log (m-\epsilon)}{\beta+\rho-\beta\rho}\right)\cr
&\leq  \exp\left\{-(\lambda-2\epsilon)(I_A(p)-\eta)^{\alpha}\frac{n^{\alpha/2}}{(t\log n)^{\alpha-1}}\right\},\nonumber
\end{align}
where  the last inequality follows by $t>\frac{\alpha(\beta+\rho-\beta\rho)}{2\beta\rho\log(m-\epsilon)}$. So, for any $\epsilon,~\eta>0$ small enough and $t>\frac{\alpha(\beta+\rho-\beta\rho)}{2\beta\rho\log(m-\epsilon)}$, we have
$$\limsup\limits_{n\rightarrow\infty}\frac{(\log n)^{\alpha-1}}{n^{\alpha/2}}
\log\mathbb{P}(\bar{Z}_n(\sqrt nA)\geq p)\geq -(\lambda-2\epsilon)\frac{(I_A(p)-\eta)^{\alpha}}{t^{\alpha-1}},$$
 which implies the upper bound, by letting $\epsilon,~\eta\rightarrow 0$ and $t\rightarrow \frac{\alpha(\beta+\rho-\beta\rho)}{2\beta\rho\log m}$.
\end{proof}
  \section{Proof of Theorem \ref{main3}.}\label{Paretostepsize}
  In this section, we assume $\mathbb{E}[|Z_1|^{\beta}]<\infty$ for some $\beta>1$ and $I_A(\cdot)$  is continuous at $p$ for some $p\in(\nu(A),1-\nu(A))$. Here, we consider the step size has Pareto tail, i.e. $\mathbb{P}(X>x)\sim \kappa x^{-\alpha}$ as $x\rightarrow\infty$ for some constants $\kappa>0$ and $\alpha>2$. We are going to show that: if $b<B$, then
  $$\lim\limits_{n\rightarrow\infty}\frac{1}{\log n}\log\mathbb{P}(\bar{Z}_n(\sqrt nA)\geq p)=-\frac{\alpha}{2}.$$
Furthermore, if $b=B$ and $A$ is an unbounded set, the above still holds, provided $0<p-\nu(A)<(1-\nu(A))/b$.
\begin{proof}
\textbf{Lower bound}. By the continuity of $I_A(\cdot)$ at $p$, for every $\epsilon>0$, there exist $\eta,~\delta>0$ such that for some $|x|\leq I_A(p)+\epsilon$,
$$\inf\limits_{y\in[x-\eta,x+\eta]}\nu(A-y)\geq p+\delta.$$
 Without loss of generality, we write $A=\sum^h_{j=1}(a_j,b_j]$. Set $$A(x,\eta):=\sum^h_{j=1}\left(a_j-(x-\eta),b_j-(x+\eta)\right),~A(\epsilon):=\sum^h_{j=1}\left(a_j+\epsilon,b_j-\epsilon\right).$$
Obviously, we can choose $\epsilon,~\eta$ small enough such that
$$\nu(A(x,\eta))>p+\frac{\delta}{2},~\nu(A(\epsilon))>\nu(A)-\frac{\delta}{2}.$$
Set
 $$\mathcal{E}:=\left\{\xi\in\mathcal{M}: \xi=\sum^b_{i=1}\delta_{{x_i}\sqrt{n}},~ \text{where}~x_1\in(x-\eta,x+\eta),~x_i\in(-\epsilon,\epsilon),~i=2,...,b\right\}.$$
By Markov property,
$$\mathbb{P}(\bar{Z}_{n}(\sqrt n A)\geq p)\geq\int_\mathcal{E} \mathbb{P}\left(\bar{Z}^{\xi}_{n-1}(\sqrt n A)\geq p\right)\mathbb{P}(Z_1\in d\xi).$$
For every $\xi\in \mathcal{E}$, it is easy to see that
$$A-x_1\supset A(x,\eta),~A-x_i\supset A(\epsilon),~i=2,...,b.$$
Hence,
\begin{align}\label{erewrer}
\mathbb{P}(\bar{Z}_{n}(\sqrt n A)\geq p)&\geq\int_\mathcal{E}\mathbb{P}\left(\bar{Z}^{\xi}_{n-1}(\sqrt nA)\geq p\right)\mathbb{P}(Z_1\in d\xi)\cr
&\geq\int_\mathcal{E} \mathbb{P}\left(\frac{Z^{1}_{n-1}(\sqrt n A(x,\eta))+\sum^b_{i=2}Z^{i}_{n-1}(\sqrt n A(\epsilon))}{\sum^b_{i=1}|Z^{i}_{n-1}|}\geq p\right)\mathbb{P}(Z_1\in d\xi)\nonumber\\
&=\mathbb{P}\left(\frac{Z^{1}_{n-1}(\sqrt n A(x,\eta))+\sum^b_{i=2}Z^{i}_{n-1}(\sqrt n A(\epsilon))}{\sum^b_{i=1}|Z^{i}_{n-1}|}\geq p\right)\mathbb{P}(Z_1\in \mathcal{E}),
\end{align}
where $Z^{i}_{n-1}(\cdot),~1\leq i\leq b$ are i.i.d copies of $Z_{n-1}(\cdot)$.
Since $\bar{Z}_{n}(\sqrt n A)\rightarrow \nu(A)$  and $|Z_n|m^{-n}\rightarrow W$ almost surely, we have
\begin{equation}\label{a.s.shoul}
\frac{Z^{1}_{n-1}(\sqrt n A(x,\eta))+\sum^b_{i=2}Z^{i}_{n-1}(\sqrt n A(\epsilon))}{\sum^b_{i=1}|Z^{i}_{n-1}|}\rightarrow
\frac{\sum^b_{i=2}\nu(A(\epsilon))W_i+\nu(A(x,\eta))W_1}{\sum^b_{i=1}W_i},~\text{as}~n\rightarrow\infty,~\mathbb{P}-a.s.,
\end{equation}
where $W_i,~1\leq i\leq b$ are i.i.d copies of $W$. If $b<B$, then $W_i$ has a continuous density on $(0,+\infty)$ (see \cite[Chapter II, Lemma 2]{Athreya}). In this case, by the dominated convergence theorem and (\ref{a.s.shoul}),
\begin{equation}\label{dfrfw}
\lim\limits_{n\rightarrow\infty}\mathbb{P}\left(\frac{Z^{1}_{n-1}(\sqrt n A(x,\eta))+\sum^b_{i=2}Z^{i}_{n-1}(\sqrt n A(\epsilon))}{\sum^b_{i=1}|Z^{i}_{n-1}|}\geq p\right)=:C(\epsilon,\eta,x)>0.
\end{equation}
Since $\mathbb{P}(X>x)\sim\kappa x^{-\alpha}$, there exists a constant $C(\kappa,x,\epsilon,\eta)=: C_7>0$ such that for $n$ large enough,
\begin{equation}\label{gfhg}
\mathbb{P}(Z_1\in\mathcal{E})\geq C_7p_bn^{-\alpha/2}.
\end{equation}
Plugging (\ref{dfrfw}) and (\ref{gfhg}) into (\ref{erewrer}), yields that for $n$ large enough,
\begin{align}
\mathbb{P}(\bar{Z}_{n}(\sqrt n A)\geq p)\geq 0.9C(\epsilon,\eta,x)C_7p_bn^{-\alpha/2},\nonumber
\end{align}
which implies the desired lower bound if $b<B$.\par
If $b=B$ and $A$ is unbounded, without loss of generality, we assume $A=(a,+\infty)$. Set

 $$\mathcal{E'}:=\left\{\xi\in\mathcal{M}: \xi=\sum^b_{i=1}\delta_{{x_i}\sqrt{n}},~ \text{where}~x_1\in[t\sqrt n,+\infty),~x_i\in(-\epsilon,\epsilon),~i=2,...,b\right\}.$$
where $t$ is some positive constant. Using similar arguments as above, one can obtain that
$$
\mathbb{P}(\bar{Z}_{n}(\sqrt n A)\geq p)\geq\mathbb{P}\left(\frac{Z^{1}_{n-1}(\sqrt n(a-t,+\infty))+\sum^b_{i=2}Z^{i}_{n-1}(\sqrt n A(\epsilon))}{\sum^b_{i=1}Z^{i}_{n-1}}\geq p\right)\mathbb{P}(Z_1\in \mathcal{E'}).
$$
For any $p\in\left(\nu(A),\nu(A)+\frac{1-\nu(A)}{b}\right)$, there exists $\delta'>0$ such that
 \begin{equation} \label{asshoul}
 p<\frac{(b-1)(\nu(A)-\delta')+1-\delta'}{b}.
 \end{equation}
If we choose $\epsilon$ small enough and $t$ large enough, then
$$\nu(A(\epsilon))\geq \nu(A)-\delta',~\nu((a-t,+\infty))>1-\delta'.$$
This, together with (\ref{a.s.shoul})and (\ref{asshoul}), yields
$$\frac{Z^{1}_{n-1}(\sqrt n(a-t,+\infty))+\sum^b_{i=2}Z^{i}_{n-1}(\sqrt n A(\epsilon))}{\sum^b_{i=1}|Z^{i}_{n-1}|}\rightarrow
\frac{(b-1)\nu(A(\epsilon))+\nu((a-t,+\infty))}{b}>p,~\mathbb{P}-a.s.,$$
which implies the desired lower bound.

\bigskip
\textbf{Upper bound}. Set $t_n:=\lfloor c\log n\rfloor$ for some $c>0$. By copying the arguments from (\ref{copyuper1}) to (\ref{copyuper2}), there exist constants $\delta,~\eta >0$ such that for $n$ large enough,
\begin{align}\label{imB}
&\mathbb{P}\left(\bar{Z}_{n}(\sqrt n A)\geq p\right)\leq \frac{4}{\delta}\mathbb{P}\left(S_{t_n}\geq(I_A(p)-\eta)\sqrt n\right)+ C_1\mathbb{E}\left[|Z_{t_n}|^{-(\beta-1)}\right],\nonumber
\end{align}
where the last inequality follows from Lemma \ref{louidorlem}. For the first term of r.h.s above, by (\ref{Nagaevinequailty}), there exist a constant $C_8>0$ depending on $c,~\kappa,~\alpha,~\eta$ such that for $n$ large enough,
\begin{equation}\label{ii}
\frac{4}{\delta}\mathbb{P}\left(S_{t_n}\geq(I_A(p)-\eta)\sqrt n\right)
\leq C_8 n^{-\frac{\alpha}{2}}\log n,
\end{equation}
For the second term, by Lemma \ref{Neyharmonic}, for $n$ large enough,
\begin{align}
\mathbb{E}\left[|Z_{t_n}|^{-(\beta-1)}\right]\leq 2C_0\left(m^{-t_n(\beta-1)}+p_1^{t_n}\right).
\end{align}
Hence, for $n$ large enough,
$$\mathbb{P}\left(\bar{Z}_{n}(\sqrt n A)\geq p\right)\leq C_8n^{-\frac{\alpha}{2}}\log n+2C_0\left(m^{-t_n(\beta-1)}+p_1^{t_n}\right).$$
Since $t_n=\lfloor c\log n\rfloor$, if we choose $c>\frac{\alpha}{2(\beta-1)\log m}\vee\frac{\alpha}{-2\log p_1}$, then for large $n$,
$$\mathbb{P}\left(\bar{Z}_{n}(\sqrt n A)\geq p\right)\leq 2C_8n^{-\frac{\alpha}{2}}\log n.$$
Thus, we have
$$\limsup\limits_{n\rightarrow\infty}\frac{1}{\log n}\log\mathbb{P}(\bar{Z}_n(\sqrt nA)\geq p)\leq-\frac{\alpha}{2}.$$
\end{proof}
\section{Proof of Theorem \ref{main4}.}
In this subsection, we assume $I_A(p)=\infty$, $J_A(p)$ is continuous at $p$ and $\mathbb{E}[X^2]<\infty$.  We are going to prove the following:\\
if $|Z_1|\sim$Pareto$(\beta)$ with some $\beta>1$, then
$$
\lim\limits_{n\rightarrow\infty}\frac{1}{n}\log\mathbb{P}(\bar{Z}_n(\sqrt nA)\geq p)
=\begin{cases}
-J_A(p)(\beta-1)\log m, &0<\beta-1<\frac{-\log p_1}{\log m}; \cr
J_A(p)\log{p_1}, &\beta-1\geq \frac{-\log p_1}{\log m},
\end{cases}
$$
if $|Z_1|\sim$Weibull$(\beta)$ with some $\beta\in(0,1)$, then
$$
\lim\limits_{n\rightarrow\infty}\frac{1}{n}\log\mathbb{P}(\bar{Z}_n(\sqrt nA)\geq p)=J_A(p)\log{p_1},~\text{for}~p_1>0,
$$
and
$$\lim_{n\rightarrow\infty}\frac{1}{n}\log\left[-\log \mathbb{P}(\bar{Z}_n(\sqrt nA)\geq p)\right]=J_A(p) \frac{\rho\beta}{\beta+\rho-\beta\rho}\log m,~\text{for}~p_1=0.
$$
\begin{proof}
\textbf{Lower bound}.  We first consider that $|Z_1|$ has  Pareto tail. If $\beta-1\geq \frac{-\log{p_1}}{\log m}$, the  lower bound is exactly the same as \cite[Lemma 3.4]{ChenHe}. Now we shall consider the case of $0<\beta-1<\frac{-\log{p_1}}{\log m}$. By exactly the same way as \cite[Lemma 3.4]{ChenHe}, for any $\epsilon>0$, there exist $r\in(0,1),~x\in\mathbb{R},~\delta>0$ and $\eta>0$ such that
 $$|r-J_A(p)|<\epsilon~\text{and}~\nu\left(\bigcap\limits_{y\in[x-\eta,x+\eta]}\frac{A-y}{\sqrt{1-r}}\right)\geq p+\delta.$$
Set $t_n:=\lfloor rn\rfloor$, and for large $M$ set
 $$E:=\left\{\exists u\in Z_{t_n},~S_u\in[(x-\eta)\sqrt n,(x+\eta)\sqrt n],~|Z^{u}_{1}|>2M\sum_{v\neq u,v\in Z_{t_n}}|Z^{v}_{1}|\right\}.$$
Using similar arguments for the lower bound in Theorem \ref{main1}, there exists a constant $C_M>0$ such that for $n$ large enough,
\begin{align}\label{dferf}
\mathbb{P}\Big(\bar{Z}_n(\sqrt nA)\geq p\Big)
&\geq C_M\mathbb{P}\left(E\right)\cr
&\geq C_M 0.9C_0m^{-t_n(\beta-1)}\nu_{t_n}\left([(x-\eta)\sqrt n,(x+\eta)\sqrt n]\right)\cr
&\geq  0.9^2C_MC_0C_9m^{-t_n(\beta-1)},
\end{align}
where the last inequality holds since
$$\lim\limits_{n\rightarrow\infty}\nu_{t_n}\left([(x-\eta)\sqrt n,(x+\eta)\sqrt n]\right)=C(r,x,\eta)=:C_9>0.$$
Taking limits in (\ref{dferf}) yields that
 $$\liminf\limits_{n\rightarrow\infty}\frac{1}{n}\log\mathbb{P}(\bar{Z}_n(\sqrt nA)\geq p)\geq-(J_A(p)+\epsilon)(\beta-1)\log m.$$
Then, the desired lower bound follows by letting $\epsilon \rightarrow 0$.

\bigskip
\textbf{Upper bound}. For  $\epsilon \in(0,J_A(p))$, set $t_n:=\lfloor(J_A(p)-\epsilon)n\rfloor$, by the arguments of \cite[Lemma 3.5]{ChenHe}, there exists $\delta>0$ such that for $n$ large enough,
$$\mathbb{P}(\bar{Z}_n(\sqrt nA)\geq p)\leq\int_{\mathcal{M}}\mathbb{P}\left(\bar{Z}^\xi_{n-t_n}(\sqrt nA)\geq \frac{1}{|\xi|}\sum_{y\in\xi}\nu_{n-t_n}(\sqrt nA-S_y)+\frac{\delta}{2}\right)\mathbb{P}(Z_{t_n}\in d\xi).$$
Hence, by Lemma \ref{louidorlem}, there exists $C_1>0$ such that for large $n$,
\begin{align}
\mathbb{P}(\bar{Z}_n(\sqrt nA)\geq p)\leq C_1\mathbb{E}\left[|Z_{t_n}|^{-(\beta-1)}\right].\nonumber
\end{align}
As a consequence, the upper bound follows by Lemma \ref{Neyharmonic}.\par
If $|Z_1|$ has Weibull tail, using similar arguments as above, together with Lemma \ref{Weibullmoment} and Lemma \ref{Weibulllemma}, we can get the desired results.
\end{proof}
\textbf{Acknowledgements}
I would like to thank my supervisor Hui He for his constant help while working
on this subject, all the useful discussions and advices.
%\newpage

\vspace{0.2cm}
Shuxiong Zhang\\
School of Mathematical Sciences, Beijing Normal University, Beijing 100875, People's Republic of China.\\
E-mail: shuxiong.zhang@qq.com.
\end{document}